\newtheorem{thm}{Theorem}[section]
\newtheorem{lem}[thm]{Lemma}
\newtheorem{rem}[thm]{Remark}
\newcommand{\norm}[1]{\left\Vert#1\right\Vert}
\newcommand{\abs}[1]{\left\vert#1\right\vert}
\newcommand{\set}[1]{\left\{#1\right\}}
\newcommand{\Real}{\mathbb R}
\newcommand{\bx}{ {\bf x}}
\newcommand{\pfrac}[2]{\frac{\partial #1}{\partial #2}}
\numberwithin{equation}{section}
\begin{document}
\title{Direct minimizing method for Yang-Mills energy over $SO(3)$ bundle}

\author{Hao Yin}

\address{Hao Yin,  School of Mathematical Sciences,
University of Science and Technology of China, Hefei, China}
\email{haoyin@ustc.edu.cn }
\thanks{The research of Hao Yin is supported by NSFC 11971451.}
\maketitle
\begin{abstract}
In this paper, we use the direct minimizing method to find Yang-Mills connections for $SO(3)$ bundles over closed four manifolds. By constructing test connections, we prove that a minimizing sequence converges strongly to a minimizer under certain assumptions. In case the strong convergence fails, we find an anti-selfdual (or selfdual) connection.	
\end{abstract}

\section{Introduction}

Suppose that $M$ is a closed oriented $4$-manifold with arbitrary Riemannian metric $g$ and $G$ is a compact simple Lie group. For each  principal $G$-bundle $P$ over $M$, one defines two cohomology classes
\[
	\eta(P)\in H^2(M,\pi_1(G))\quad \text{and}\quad p_1(P)\in H^4(M,\mathbb Z)
\]
such that two bundles are isomorphic if and only if the two cohomology classes are the same. The first one $\eta(P)$ determines the restriction of the bundle to the $3$-skeleton of $M$ and the second one $p_1(P)$ is the first Pontryagin class, which is related to the curvature integral of a $G$-connection by the Chern-Weil theory.

For a given $G$-connection $D$ over $P$, the Yang-Mills functional is defined by
\[
	\mathcal Y \mathcal M(D)=\int_M \abs{F_D}^2 dV_g,
\]
where $F_D$ is the curvature form of $D$.
A critical point of $\mathcal Y \mathcal M$ is called a Yang-Mills connection. With the help of the Hodge star operator, any two form $\omega$ over $M$ is uniquely decomposed into the sum of selfdual and anti-selfdual forms
\[
\omega= \omega^+ + \omega^-.
\]
A connection $D$ is said to be anti-selfdual(ASD) if and only if
\[
(F_D)^+=0.
\]
Similarly, $D$ is selfdual(SD) if and only if $(F_D)^-=0$.
By the Chern-Weil theory, we have
\begin{equation}
p_1(P)= \frac{1}{4\pi^2} \left( \norm{F_D^+}^2 - \norm{F_D^-}^2 \right),
	\label{eqn:pontryagin}
\end{equation}
which implies that the selfdual (or anti-selfdual) connections realize the absolute minimum of the Yang-Mills functional. 

Anti-selfdual $SU(2)$ connections played an important role in the geometry of four manifolds since the groundbreaking paper \cite{donaldson1983application}. In \cite{fintushel1984}, Fintushel and Stern simplified the argument of Donaldson by using $SO(3)$ connections. Known examples of ASD/SD connections come from (1) explicit construction (see \cite{atiyah1978}), (2) the gluing method due to Taubes \cite{taubes1982self,taubes1984self} and (3) the complex category (see \cite{donaldson1990}). Variational methods \cite{sedlacek1982direct,sibner1989} were also applied to prove the existence of Yang-Mills connections. 

A very natural way of constructing Yang-Mills connection and ASD/SD connection is by direct minimizing. Exploiting the compactness theorem due to Uhlenbeck \cite{uhlenbeck1982connections}, Sedlacek \cite{sedlacek1982direct} studied the direct minimizing approach for the Yang-Mills functional over a four manifold. For our purpose, it is easier to work with vector bundles with structure group $G$ instead of principal bundles. For any $\eta\in H^2(M,\pi_1(G))$ and $p_1\in H^4(M,\mathbb Z)$, we denote by $E_{\eta,p_1}$ a vector bundle with structure group $G$, the principal bundle $P_E$ associated to which satisfies $\eta(P_E)=\eta$ and  $p_1(P_E)=p_1$. Let $\mathcal C_{\eta,p_1}$ be the set of all smooth $G$-connections of $E_{\eta,p_1}$ and 
\[
	\mathcal C_\eta= \bigcup_{p_1\in H^4(M,\mathbb Z)} \mathcal C_{\eta,p_1}.
\]
\begin{rem}
	The map from the isomorphism classes of principal $G$-bundles to $H^2(M,\pi_1(G))\times H^4(M,\mathbb Z)$ is not surjective in general. However, $\eta(P)$ can take all values in $H^2(M,\pi_1(G))$. When the pair $(\eta,p_1)$ is not in the image, we simply take $\mathcal C_{\eta,p_1}=\emptyset$.
\end{rem}
The main result in \cite{sedlacek1982direct} is
\begin{thm}\label{thm:1982}
	[Sedlacek, 1982] For each $\eta\in H^2(M,\pi_1(G))$, there is $p_1\in H^4(M,\mathbb Z)$ and a smooth Yang-Mills connection $D$ in $\mathcal C_{\eta,p_1}$ such that
	\[
		\mathcal Y \mathcal M(D)= \inf_{D'\in \mathcal C_{\eta}} \mathcal Y \mathcal M(D').
	\]
\end{thm}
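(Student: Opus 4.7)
The plan is to apply the direct method of the calculus of variations, combined with Uhlenbeck's compactness and removable singularities theorems to handle gauge invariance and curvature concentration.

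Pick a minimizing sequence $D_i \in \mathcal{C}_{\eta, p_{1,i}}$ with $\mathcal{YM}(D_i) \to e := \inf_{\mathcal{C}_\eta} \mathcal{YM}$. The Chern--Weil identity \eqref{eqn:pontryagin} gives $4\pi^2 |\langle p_{1,i}, [M]\rangle| \leq \mathcal{YM}(D_i)$, so the integers $\langle p_{1,i}, [M]\rangle$ are bounded along the sequence. After passing to a subsequence I may therefore assume all $D_i$ live on a single bundle $E_{\eta, p_1}$.

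Define the bubble set $S := \{x \in M : \limsup_i \int_{B_r(x)} |F_{D_i}|^2 \geq \varepsilon_0 \text{ for every } r > 0\}$, where $\varepsilon_0$ is Uhlenbeck's small-energy threshold; a standard covering argument forces $\#S \leq e/\varepsilon_0 < \infty$. On any ball $B \subset M \setminus S$ with $\int_B |F_{D_i}|^2 < \varepsilon_0$ for all large $i$, Uhlenbeck's theorem produces gauges in which the connection $1$-forms $A_{i,B}$ satisfy Coulomb gauge with uniform $W^{1,2}$ bounds. Along a diagonal subsequence, both the $A_{i,B}$ and the transition functions (bounded in $W^{2,2}$) converge weakly, assembling into a $W^{1,2}$ connection $D_\infty$ on a bundle $E' \to M \setminus S$.

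Because the $D_i$ are asymptotically minimizing, $D_\infty$ is a weak Yang--Mills connection on $M \setminus S$; Coulomb-gauge ellipticity then upgrades it to a smooth one, and Uhlenbeck's removable singularities theorem extends it to a smooth Yang--Mills connection on a bundle $E'$ over all of $M$. Since $S$ is finite, the $3$-skeleton of $M$ may be chosen to avoid $S$, and the restriction of $E'$ to it agrees with the weak limit of the restrictions of $E_{\eta, p_1}$, so $\eta(E') = \eta$ and hence $E' \in \mathcal{C}_\eta$; in particular $\mathcal{YM}(D_\infty) \geq e$.

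The final step rules out bubbling. By weak lower semicontinuity together with curvature concentration at each $x \in S$ (with mass $m_x \geq \varepsilon_0$),
\[
	\mathcal{YM}(D_\infty) + \sum_{x \in S} m_x \leq e,
\]
which is incompatible with $\mathcal{YM}(D_\infty) \geq e$ unless $S = \emptyset$. Thus $S = \emptyset$, $D_\infty \in \mathcal{C}_{\eta, p_1}$, and $\mathcal{YM}(D_\infty) = e$. I expect the main obstacle to be the bundle-patching step: assembling the weakly-converging local Coulomb gauges into a globally well-defined limit bundle $E'$ and verifying $\eta(E') = \eta$ requires the full force of Uhlenbeck's gauge-patching machinery and a careful analysis of the bundle topology over the $3$-skeleton across the bubble locus.
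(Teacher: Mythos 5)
The paper does not actually prove Theorem \ref{thm:1982}; it quotes Sedlacek, and your outline follows Sedlacek's original route (Chern--Weil bound on $p_1$ via \eqref{eqn:pontryagin}, finite concentration set, Uhlenbeck's local Coulomb gauges with uniform $W^{1,2}$ bounds, patching of gauges and transition functions, removable singularities, preservation of $\eta$). The route is the right one, but there is a load-bearing step that does not follow as you state it: ``Because the $D_i$ are asymptotically minimizing, $D_\infty$ is a weak Yang--Mills connection on $M\setminus S$.'' A weak $W^{1,2}$ limit of a minimizing sequence is not automatically a weak solution of the Euler--Lagrange equation: a minimizing sequence need not be Palais--Smale, the $D_i$ satisfy no approximate Yang--Mills equation, and weak $L^2$ convergence of the curvatures does not let you pass any such equation to the limit. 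Sedlacek's proof of precisely this point is a substantive local comparison argument: one shows $D_\infty$ is a local minimizer by taking a competitor agreeing with $D_\infty$ near the boundary of a small ball, splicing it back into $D_i$ for large $i$ using the convergent gauges and transition functions, and contradicting minimality; only then do Coulomb-gauge ellipticity and Uhlenbeck's removable singularity theorem (which applies to Yang--Mills connections of finite energy) produce a smooth connection on a bundle $E'$ over all of $M$. Since your key inequality $\mathcal Y \mathcal M(D_\infty)\geq e$ presupposes exactly this smooth extension together with $\eta(E')=\eta$, the gap propagates into your final step.

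Two further remarks. First, the $\eta$-identification you defer to ``gauge-patching machinery'' is the actual heart of Sedlacek's paper, not a corollary of Uhlenbeck's theorems: ``the restriction of $E'$ agrees with the weak limit of the restrictions'' is not yet an argument, and one instead identifies $\eta$ from the limiting \v{C}ech cocycle of transition functions on a fixed good cover (this is delicate because $W^{2,2}$ convergence in dimension four is borderline for continuity). Second, your conclusion that $S=\emptyset$ and that $D_\infty$ lies in $\mathcal C_{\eta,p_1}$ with the \emph{same} $p_1$ is stronger than the theorem; it is a legitimate a posteriori consequence only because the infimum is taken over the whole union $\mathcal C_\eta$, and it would fail for minimization within a fixed $\mathcal C_{\eta,p_1}$, consistent with the paper's remark that bubbling can change $p_1$. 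Even in your setting, keeping the same $p_1$ requires strong $L^2$ convergence of curvatures (norm convergence plus weak convergence), which you only obtain after $\mathcal Y \mathcal M(D_\infty)=e$ is established; none of this extra conclusion is needed for the statement being proved.
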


Due to the possible concentration of Yang-Mills energy, even if one takes a minimizing sequence in $\mathcal C_{\eta,p_1}$ for some fixed $p_1\in H^4(M,\mathbb Z)$, the limit given by the Uhlenbeck compactness theorem may not be a connection on the bundle $E_{\eta,p_1}$. Fortunately, $\eta$ is preserved in this limit and hence Theorem \ref{thm:1982} holds. 

\begin{rem}\label{rem:eta}
	Theorem \ref{thm:1982} is less informative in case $\eta=0$, because of the flat connection on the trivial bundle.  On the contrary, the gluing method of Taubes applies only when $\eta=0$.
\end{rem}

In this paper, we consider the special case $G=SO(3)$ where $\pi_1(G)=\mathbb Z_2$. By exploiting an explicit form of an ASD $SO(3)$-connection on $\Real^4$ and some careful construction of test connections, we are able to draw more conclusions from a direct minimizing argument. More precisely, when $G=SO(3)$, the cohomology class $\eta(P)$ is the same as the second Stiefel-Whitney class $w_2(P)\in H^2(M,\mathbb Z_2)$ and
\begin{equation}
w_2(P)^2 = p_1(P) \qquad \text{mod} \quad 4.
	\label{eqn:w2p1}
\end{equation}
Here $w_2(P)^2$ is the Pontryagin square and lies in $H^4(M,\mathbb Z_4)$. Moreover, for each pair $(\eta,p_1)$ satisfying \eqref{eqn:w2p1} there is an $SO(3)$-bundle $E_{\eta,p_1}$ whose second Stiefel-Whitney class is $\eta$ and whose first Pontryagin class is $p_1$. Since $M$ is oriented, there is a natural isomorphism between $H^4(M,\mathbb Z)$ and $\mathbb Z$. Via this isomorphism, we may take $p_1$ as an integer. By \eqref{eqn:w2p1}, all possible values of $p_1$ form a set of the form $\set{4k+l}_{k\in \mathbb Z}$ for some $l\in \{0,1,2,3\}$. We denote this set by $K_\eta$.

Our main result is
\begin{thm}
	\label{thm:main}
	Suppose that $G=SO(3)$ and that $M$ is a closed oriented $4$-manifold with a Riemannian metric $g$. Fix an $\eta\in H^2(M,\mathbb Z_2)$. Then

	(1) for any $p_1\in K_\eta$ satisfying $-3\leq p_1\leq 3$, there is a smooth connection $D$ on $E_{\eta,p_1}$ which minimizes $\mathcal Y \mathcal M$ in $\mathcal C_{\eta,p_1}$.

	(2) for a fixed $p_1\geq 0$ in $K_\eta$, either there is a smooth connection $D$ on $E_{\eta,p_1}$ minimizing $\mathcal Y \mathcal M$ in $\mathcal C_{\eta,p_1}$; or there exists $p_{1,\infty}\in K_\eta$ with $p_1>p_{1,\infty}\geq 0$ such that $E_{\eta,p_1}$ admits an SD connection.

	(3) for a fixed $p_1\leq 0$ in $K_\eta$, either there is a smooth connection $D$ on $E_{\eta,p_1}$ minimizing $\mathcal Y \mathcal M$ in $\mathcal C_{\eta,p_1}$; or there exists $p_{1,\infty}\in K_\eta$ with $p_1<p_{1,\infty}\leq 0$ such that $E_{\eta,p_1}$ admits an ASD connection.
\end{thm}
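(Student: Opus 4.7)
The plan is to combine direct minimization with the Uhlenbeck--Sedlacek bubbling analysis, enhanced by a careful construction of test connections using explicit $SO(3)$ instantons on $\mathbb R^4$. I would first take a minimizing sequence in $\mathcal C_{\eta,p_1}$ and apply Uhlenbeck compactness, extracting (modulo gauge, after passing to a subsequence) a smooth Yang--Mills limit $D_\infty$ on a possibly different bundle $E_{\eta,p_{1,\infty}}$, with bubbles concentrating at finitely many points. Each bubble $\Lambda_j$ is a Yang--Mills $SO(3)$ connection on $S^4$; since $H^2(S^4,\mathbb Z_2)=0$, its characteristic number $p_1^{(j)}$ lies in $4\mathbb Z$, and the Chern--Weil formula \eqref{eqn:pontryagin} gives $\mathcal E_j:=\mathcal Y\mathcal M(\Lambda_j)\geq 4\pi^2|p_1^{(j)}|$, with equality iff $\Lambda_j$ is SD or ASD. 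The decomposition reads
\[
p_1=p_{1,\infty}+\sum_j p_1^{(j)},\qquad \inf_{\mathcal C_{\eta,p_1}}\mathcal Y\mathcal M=\mathcal Y\mathcal M(D_\infty)+\sum_j\mathcal E_j.
\]

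The key technical input is a test connection construction: for any smooth $D\in\mathcal C_{\eta,q}$ and any point $x_0\in M$, gluing a basic SD (resp.\ ASD) $SO(3)$ instanton of small scale $\lambda$ at $x_0$ produces $D^{\#}\in\mathcal C_{\eta,q+4}$ (resp.\ $\mathcal C_{\eta,q-4}$) with $\mathcal Y\mathcal M(D^{\#})\leq\mathcal Y\mathcal M(D)+16\pi^2+o(1)$ as $\lambda\to 0$. Iterating gives $\inf_{\mathcal C_{\eta,p_1}}\mathcal Y\mathcal M\leq \inf_{\mathcal C_{\eta,q}}\mathcal Y\mathcal M+4\pi^2|p_1-q|$ for all $q\in K_\eta$. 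Taking $q=p_{1,\infty}$ and combining with the bubble decomposition together with the triangle inequality $\sum_j|p_1^{(j)}|\geq|p_1-p_{1,\infty}|$ yields a chain of inequalities that must collapse to equalities. Consequently $D_\infty$ is a minimizer on $E_{\eta,p_{1,\infty}}$, each bubble is SD or ASD, all bubbles share the same sign of $p_1^{(j)}$, and
\[
\inf_{\mathcal C_{\eta,p_1}}\mathcal Y\mathcal M = \inf_{\mathcal C_{\eta,p_{1,\infty}}}\mathcal Y\mathcal M + 4\pi^2|p_1-p_{1,\infty}|.
\]

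The decisive step is a sharpened form of the test connection. Using the explicit form of the basic $SO(3)$ instanton, one would show that if $D_\infty$ were not SD (resp.\ ASD), then a cleverly chosen gluing near a point with $F_{D_\infty}^{-}\neq 0$ (resp.\ $F_{D_\infty}^{+}\neq 0$) produces an $O(1)$ gain, contradicting the equality. Hence SD bubbling forces $D_\infty$ to be SD, and ASD bubbling forces $D_\infty$ to be ASD. The theorem then follows by case analysis. For (1), $|p_1|\leq 3$: SD bubbling would force $D_\infty$ to be SD (so $p_{1,\infty}\geq 0$) yet $p_{1,\infty}\leq p_1-4\leq -1$, a contradiction; ASD bubbling is ruled out symmetrically; hence no bubbling and a smooth minimizer is attained. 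For (2), $p_1\geq 0$: ASD bubbling would require $p_{1,\infty}\geq p_1+4>0$ together with $p_{1,\infty}\leq 0$, which is impossible; hence only SD bubbling can occur, producing an SD connection $D_\infty$ on $E_{\eta,p_{1,\infty}}$ with $0\leq p_{1,\infty}<p_1$. Part (3) follows by the symmetric argument with ASD in place of SD.

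The main obstacle I anticipate is the sharpened test connection in the third paragraph: a naive BPST gluing yields only a cross term of order $\lambda^2\log(1/\lambda)\to 0$, so the strict $O(1)$ gain needed to force $D_\infty$ to be SD or ASD must come from a more delicate construction exploiting the explicit form of the basic $SO(3)$ instanton on $\mathbb R^4$ advertised in the abstract.
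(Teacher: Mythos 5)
Your overall strategy---minimize, extract a bubble decomposition, show the resulting identity $\inf_{\mathcal C_{\eta,p_1}}\mathcal Y\mathcal M=\mathcal Y\mathcal M(D_\infty)+4\pi^2\abs{p_1-p_{1,\infty}}$ with same-sign SD/ASD bubbles, then force $D_\infty$ to be SD (resp.\ ASD) by exhibiting a strictly better test connection---is exactly the paper's strategy. But your proposal has a genuine gap precisely at the step you yourself flag as ``the main obstacle'': the sharpened test connection is never constructed, and it is the technical heart of the theorem (Theorem \ref{thm:tech} and all of Section \ref{sec:test}). Moreover your diagnosis of what is needed there is off. You do not need an $O(1)$ gain: since the comparison is against the exact equality $\inf_{\mathcal C_{\eta,p_1}}\mathcal Y\mathcal M=\mathcal Y\mathcal M(D_\infty)+4\pi^2\abs{p_1-p_{1,\infty}}$, a strict improvement at a single fixed small scale $\lambda$ already contradicts minimality. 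The paper obtains such an improvement of size $c\lambda^2$: rather than a single cut-off transition, the first-order parts of the two connections are allowed to overlap on the neck, so the cross term does not vanish but equals $-2c_0\lambda^2\mathcal P+\varepsilon(\delta)\lambda^2$ with $\mathcal P$ as in \eqref{eqn:P}; the freedom to rotate the identification of the $\mathfrak s\mathfrak o(3)$ bases (Lemma \ref{lem:trivial}) makes $\mathcal P>0$ whenever $(F_D)^+(\bx_0)\ne 0$, and conformal normal coordinates (Lemma \ref{lem:conformal}) push all metric errors to $O(\lambda^3)$ so they cannot swamp the $\lambda^2$ term. Without some such construction, your ``decisive step'' is unsupported and parts (2), (3) and the exclusion of bubbling in (1) do not follow; the worry that a cross term of order $\lambda^2$ ``tends to zero and hence cannot suffice'' is a misreading of the logical structure.

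A secondary gap: you apply Uhlenbeck compactness directly to an arbitrary minimizing sequence and assert the full energy identity $\inf=\mathcal Y\mathcal M(D_\infty)+\sum_j\mathcal E_j$ together with the identification of each bubble as a Yang--Mills connection on $S^4$. Minimizing sequences carry no $\varepsilon$-regularity, so neck energy loss and bubble identification are not free (this is exactly why Sedlacek's framework is insufficient here); the paper first mollifies the sequence by the Yang--Mills $\alpha$-flow to obtain the local curvature estimates of Lemma \ref{lem:46} and only then runs the bubbling analysis. Once these two points are supplied, your chain-collapse argument and the case analysis in your final paragraph do match the paper's proof.
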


\begin{rem}
	For underlying manifolds and bundles satisfying certain properties, we can be more precise in the part (2) and (3) of above theorem.

	Recall that for generic metric, the dimension of the moduli space of irreducible ASD connections over $E_{\eta,p_1}$ is
\[
-2p_1 - 3(1-b_1(M)+ b_+(M)),
\]
where $b_1$ and $b_+$ are betti numbers(see Section 4.3 in \cite{donaldson1990}.) Hence for some four manifold $M$, there is no {\it irreducible} ASD connections for small $-p_1$ (compared to a constant determined by $b_1(M)$ and $b_+(M)$). On the other hand, for $E_{\eta,p_1}$ to be reducible, there should exist a line bundle $L$ whose first Chern class $c_1\in H^2(M,\mathbb Z)$ satisfies
\[
p_1= c_1^2 \qquad \text{and} \qquad \eta\equiv c_1 \quad \text{mod} \quad 2.
\]
Hence, there are choices of $\eta$ in $H^2(M,\mathbb Z_2)$ such that $E_{\eta,p_1}$ can not be reducible if $\abs{p_1}$ is smaller than some constant depending on $\eta$ and the intersection form of $M$. For such bundles, the part (3) of the above theorem implies that there is a smooth minimizer in $\mathcal C_{\eta,p_1}$.
\end{rem}

The proof of Theorem \ref{thm:main} is by direct minimizing. The first part of it says that if $-3\leq p_1\leq 3$, then a minimizing sequence in $\mathcal C_{\eta,p_1}$ converges strongly and gives the minimizer that we want. The proof of this part amounts to ruling out the possible energy concentration. The second part shows that if we do minimizing in $C_{\eta,p_1}$, either the limit is strong and we get a minimizer in this class, or the concentration of energy results in the existence of a SD connection. Obviously, the third part is nothing but the orientation-reversing version of the second part.

The analytic framework of direct minimizing in \cite{sedlacek1982direct} is not strong enough for bubbling analysis. For our purpose, we pick an arbitrary minimizing sequence $D_i$ and mollify them using the Yang-Mills $\alpha$-flow studied in \cite{hong2015yang}. The advantage is that for the new sequence $D_i'$, we have strong estimates in neighborhoods where there is no energy concentration(see Lemma \ref{lem:46}). We briefly recall this analysis and its consequences in Section \ref{sec:pre}. If there is energy concentration, we obtain bubbles. In our case, the bubbles are energy minimizing connections of $SO(3)$-bundles over $S^4$. Notice that $SU(2)$ is the universal covering of $SO(3)$ and that $H^2(S^4,\mathbb Z)$ vanishes, hence there is explicit correspondence between $SU(2)$-connections and $SO(3)$-connections. On the other hand, for a degree one $SU(2)$-bundle over $S^4$, there is a well known SD connection due to 't Hooft (see Section \ref{subsec:bubble}). 

By using this explicit solution and a careful construction of test connections, we are able to prove
\begin{thm}\label{thm:tech}
	Suppose that $D$ is a smooth connection on $E_{\eta,p_1}$ which is not ASD, i.e. for some $\bx \in M$,
\[
(F_D)^+(\bx)\ne 0.
\]
Then for any $k\in \mathbb N$, there is a connection $D'$ on $E_{\eta,p_1-4k}$ such that
\[
\mathcal Y \mathcal M(D')< \mathcal Y \mathcal M(D) +  16k \pi^2.
\]
\end{thm}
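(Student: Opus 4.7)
The starting observation reduces everything to a cleaner inequality. Combining $\|F\|^2 = \|F^+\|^2 + \|F^-\|^2$ with the Chern--Weil identity \eqref{eqn:pontryagin}, every $SO(3)$-connection $D''$ satisfies $\mathcal Y \mathcal M(D'') = 2\|F_{D''}^+\|^2 - 4\pi^2 p_1(D'')$. Applied to $D$ and to a sought $D'$ on $E_{\eta, p_1 - 4k}$, the target $\mathcal Y \mathcal M(D') < \mathcal Y \mathcal M(D) + 16k\pi^2$ becomes
\[
	\|F_{D'}^+\|^2 < \|F_D^+\|^2.
\]
This is precisely where the hypothesis $(F_D)^+(\bx_0)\neq 0$ enters: it gives $\|F_D^+\|^2>0$ and thus leaves room to strictly decrease $\|F^+\|^2$. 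The plan is then to graft $k$ anti-selfdual bubbles into $D$ near $\bx_0$; since each ASD bubble contributes $0$ to $\|F^+\|^2$ and shifts $p_1$ by $-4$, they are exactly the right tool, provided the gluing can be arranged so that the local $F_D^+$ is genuinely killed without introducing critical-size errors.

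For the construction, by continuity choose a ball $B(\bx_0,2\rho)$ on which $|F_D^+|^2\geq c_0>0$; since $H^2(B,\mathbb Z_2)=0$, the bundle $E_{\eta,p_1}$ is trivial over $B$, and in a trivialization with normal coordinates, $D=d+A$ on $B(0,2\rho)\subset\Real^4$. The bubble is obtained from the 't Hooft charge-one $SU(2)$-instanton on $S^4$ (reversed in orientation to be ASD) projected by $SU(2)\to SO(3)$: an ASD $SO(3)$-connection on $S^4$ with $w_2=0$, $p_1=-4$, and $\mathcal Y \mathcal M = 16\pi^2$. Stereographic projection and rescaling yield a family of ASD connections $A_\lambda$ of scale $\lambda$ on $\Real^4$, concentrated in $B(0,\lambda)$, with tail $|A_\lambda(x)|=O(\lambda^2/|x|^3)$ in the singular gauge. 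Pick $k$ pairwise-disjoint balls $B(\bx_i,\sigma)\subset B(\bx_0,\rho)$ with $\sigma$ fixed and small, transplant a scale-$\lambda$ bubble into each (in singular gauge away from the center, non-singular gauge near the center, with the standard Taubes gauge transition), and use cut-offs $\chi_i$ equal to $1$ on $B(\bx_i,\sigma/2)$ and supported in $B(\bx_i,\sigma)$ to define
\[
	D' := d + \Bigl(1-\sum_{i}\chi_i\Bigr)A + \sum_{i}\chi_i A_\lambda^{(i)}.
\]
By additivity of Pontryagin classes under connect sum and vanishing of $w_2$ for each bubble, the underlying bundle is isomorphic to $E_{\eta,p_1-4k}$.

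The self-dual curvature of $D'$ then satisfies
\[
	\|F_{D'}^+\|^2 \leq \|F_D^+\|^2 - \sum_{i=1}^{k}\int_{B(\bx_i,\sigma/2)}|F_D^+|^2 + E(\lambda),
\]
where the subtracted term reflects that $D'$ agrees with an ASD bubble on each $B(\bx_i,\sigma/2)$ (so $F_{D'}^+$ vanishes there), and $E(\lambda)$ gathers annular cut-off terms $d\chi_i \wedge A_\lambda^{(i)}$, cross terms $\{A,A_\lambda^{(i)}\}$, bubble tails $F_{A_\lambda^{(i)}}$ outside $B(\bx_i,\sigma/2)$, and the error from the gauge transition. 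Each contribution is controlled by the singular-gauge decay of $A_\lambda$ and obeys $E(\lambda)\to 0$ as $\lambda\to 0$ with $\sigma$ fixed. On the other hand $\sum_i\int_{B(\bx_i,\sigma/2)}|F_D^+|^2 \geq kc_0\,|B(0,\sigma/2)|>0$ is a positive quantity independent of $\lambda$. Choosing $\lambda$ small enough yields the strict inequality $\|F_{D'}^+\|^2 < \|F_D^+\|^2$, and the theorem follows.

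The hard part will be the energy estimate in the last step, specifically guaranteeing $E(\lambda)=o(1)$. This requires the singular gauge for the bubble on the annulus (so the tail is $O(\lambda^2/|x|^3)$ rather than the slowly decaying $O(1/|x|)$ of BPST gauge), a careful bookkeeping of the cross terms between the smooth background $A$ and the concentrating bubble, and an arrangement of the gauge transition between singular and non-singular gauges on a thin intermediate annulus so that no critical-size term appears. The bundle-theoretic step of identifying the glued object with $E_{\eta,p_1-4k}$ is the secondary obstacle and reduces to the additivity of $p_1$ under connect sum and the vanishing $w_2$ of the bubble bundle.
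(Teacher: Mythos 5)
Your reduction via \eqref{eqn:pontryagin} to the inequality $\norm{F_{D'}^+}^2<\norm{F_D^+}^2$ is correct, but the central estimate
\begin{equation*}
	\norm{F_{D'}^+}^2 \leq \norm{F_D^+}^2 - \sum_{i}\int_{B(\bx_i,\sigma/2)}\abs{F_D^+}^2 + E(\lambda),\qquad E(\lambda)\to 0 \ \text{as}\ \lambda\to 0 \ (\sigma \ \text{fixed}),
\end{equation*}
is not justified and is where the argument breaks down. Your list of error terms omits the terms produced by cutting off the \emph{background} connection: on the annulus $B(\bx_i,\sigma)\setminus B(\bx_i,\sigma/2)$ the curvature of $d+(1-\chi_i)A+\chi_i A_\lambda^{(i)}$ contains $-d\chi_i\wedge A$ and $\bigl((1-\chi_i)^2-(1-\chi_i)\bigr)A\wedge A$, and these do \emph{not} vanish as $\lambda\to 0$. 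In a radial gauge at $\bx_i$ one has $\abs{A}\sim \abs{F_D(\bx_i)}\abs{x-\bx_i}$ and $\abs{d\chi_i}\sim \sigma^{-1}$, so these terms are of pointwise size $\sim\abs{F_D(\bx_i)}$ on a region of volume $\sim\sigma^4$; their contribution to $\norm{F_{D'}^+}^2$ is of exactly the same order $\sim\abs{F_D(\bx_i)}^2\sigma^4$ as the gain $k c_0\abs{B(0,\sigma/2)}$ you claim, and it carries no sign. Heuristically this is unavoidable: the holonomy of $D'$ along loops in the annulus agrees with that of $D$, so the flux you delete from the inner ball reappears in the transition region -- a plain interpolation between $A$ and an ASD bubble cannot simply erase $\int_{B(\bx_i,\sigma/2)}\abs{F_D^+}^2$. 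A further warning sign is that your construction is completely insensitive to how the bubble is rotated in $SO(3)$ relative to $F_D^+(\bx_0)$, whereas any genuine leading-order gain must depend on this alignment.

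This is precisely the subtlety the paper is organized around: there the gain is only of order $\lambda^2$, not of fixed size, and it arises as an interaction term. The linear-in-$x$ parts of the two connections are kept with \emph{overlapping} cutoffs (only the remainders $w_l,w_r$ are cut off), so the surviving cross term $\int\langle dA_R',dA_L'\rangle$ can be computed exactly; by Lemma \ref{lem:twist} its bulk part integrates to zero over $S^3$, and what remains is $-2c_0\lambda^2\mathcal P$, where $\mathcal P$ pairs $F^+(\bx_0)$ with the chosen basis of $\mathfrak s\mathfrak o(3)$ and is made positive by rotating the bubble (Lemma \ref{lem:trivial}); all other contributions are $\varepsilon(\delta)\lambda^2$ or higher order in $\lambda$, after conformal normal coordinates are used to reduce metric errors to $O(\lambda^3)$ (Lemma \ref{lem:conformal}). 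To repair your proof you would need either to carry out such a second-order expansion with a sign-definite interaction term (exploiting the gluing rotation), or to find some other mechanism beating the $O(1)$-in-$\lambda$ cutoff error; as stated, $E(\lambda)=o(1)$ is false for your $D'$.
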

This theorem is the main result of this paper from a technical point of view. Its proof relies on a construction similar to Section 7 and 8 of \cite{taubes1982self}, see also Section 7.2 of \cite{donaldson1990}. However, the computation here is more subtle. Instead of using one cut-off function to do the transition between two connections. We treat the first order approximation and the remainder differently by allowing overlaps for the first order approximation of the connections from both sides. It is this overlapping that contribute an interacting term in the expansion of energy functional. Due to the symmetry of Lie group, we can arrange so that the interacting term has a favorable sign from which Theorem \ref{thm:tech} follows.



The rest of the paper is organized as follows. In Section \ref{sec:pre}, we collect some known results on the Yang-Mills $\alpha$-flow and the explicit ASD connection. In Section \ref{sec:test}, we construct the test connection which leads to the proof of Theorem \ref{thm:tech}. In the final section, we prove Theorem \ref{thm:main}.

\section{Preliminaries}\label{sec:pre}

In this section, we collect a few known results and set up for the proofs.

\subsection{Yang-Mills $\alpha$-flow}\label{subsec:alpha}
We define
\[
\mathcal Y \mathcal M_\alpha(D)= \int_M (1+ \abs{F_A}^2)^\alpha dv
\]
as in \cite{hong2015yang}. For $\alpha>1$, it is shown there that the gradient flow of  $\mathcal Y \mathcal M_\alpha$ functional starting from an arbitrary initial connection exists for all time and converges to a critical point of $\mathcal Y \mathcal M_\alpha$. 

For a fixed $\eta$ and $p_1$, take $D_i\in \mathcal C_{\eta,p_1}$ satisfying
\[
\lim_{i\to \infty} \mathcal Y \mathcal M(D_i)= \inf_{D'\in \mathcal C_{\eta,p_1}} \mathcal Y \mathcal M(D').
\]
Since $D_i$ is smooth, we may take $\alpha_i$ sufficiently close to $1$ such that
\[
\mathcal Y \mathcal M_{\alpha_i}(D_i)\leq \mathcal Y \mathcal M(D_i) + V(M) + \frac{1}{i}.
\]
Here $V(M)$ is the volume of $M$. Let $D_i(t)$ be the solution of the $\alpha_i$-Yang-Mills flow from $D_i$ and denote $D_i(1)$ by $D_i'$. Then by the monotonicity of $\mathcal Y \mathcal M_{\alpha_i}$ energy along the flow, we have
\[
\mathcal Y \mathcal M(D_i') \leq \mathcal Y \mathcal M(D_i) + \frac{1}{i}.
\]
Hence, $D_i'$ is also a minimizing sequence. Moreover, we have
\begin{lem}[Lemma 4.6 in \cite{hong2015yang}]\label{lem:46}
There exists $\varepsilon>0$ such that if $B_r(x)\subset M$ satisfies
\[
\limsup_{i\to \infty} \int_{B_r(x)} \abs{F_{D_i'}}^2 dv \leq \varepsilon,
\]
then
\[
\norm{\nabla^k_{D_i'} F_{D_i'}}_{C^0(B_{r/4}(x))} \leq C r^{-k-2}
\]
for any $k\in \set{0}\cup \mathbb N$.
\end{lem}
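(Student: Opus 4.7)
The statement is an $\varepsilon$-regularity estimate for the curvature at time $t=1$ of the $\alpha_i$-Yang-Mills flow, of the type that is by now standard for conformally invariant variational problems in their critical dimension (harmonic maps in dimension $2$, Yang-Mills in dimension $4$). Since the $D_i'$ are not critical points but merely smoothed representatives of a minimizing sequence, the argument must be \emph{parabolic}: one has to convert the spatial smallness of $\int_{B_r(x)}|F_{D_i'}|^2$ into smallness on a parabolic cylinder ending at $t=1$ and then run an $\varepsilon$-regularity for the flow there.

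\textbf{Backward propagation and Coulomb gauge.} Write $e_{\alpha} = (1+|F|^2)^{\alpha-1}$, so the flow equation reads $\partial_t A = -D_A^*(e_\alpha F)$. Pair this with a spatial cutoff supported in $B_r(x)$ and integrate over a small time interval ending at $t=1$: one obtains a localized energy identity of the schematic form
\[
\int_{B_{r/2}(x)} |F(t)|^2\, dv \le \int_{B_r(x)} |F(1)|^2\, dv + C(1-t)r^{-2}\,E_0,
\]
where $E_0$ is a uniform bound for $\mathcal Y \mathcal M(D_i'(s))$ along the flow (which exists because the $D_i$ form a minimizing sequence). Consequently the small-energy hypothesis persists on a parabolic cylinder $Q = B_{r/2}(x)\times[1-\tau r^2,1]$ for a small $\tau$ independent of $i$. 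On each time slice, Uhlenbeck's gauge-fixing theorem applies thanks to the small-$L^2$-energy assumption and places the connection $1$-form into a Coulomb gauge on $B_{r/2}(x)$, with $\|A\|_{W^{1,2}} \le C\|F\|_{L^2}$.

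\textbf{Moser iteration and bootstrap.} In this gauge the $\alpha_i$-flow becomes a quasi-linear parabolic system whose principal part is $\partial_t - e_{\alpha_i}\Delta$. Deriving the evolution of $|F|^2$ along the flow via a Bochner identity yields a subsolution inequality of the form
\[
\partial_t |F|^2 - c\,\Delta |F|^2 \le C|F|^3 + C|F|^2,
\]
with $c>0$ bounded uniformly in $\alpha_i$ close to $1$. Small $L^2$ energy on $Q$ then feeds the standard parabolic Moser iteration (in the spirit of Struwe's treatment of the harmonic map heat flow), yielding the pointwise bound $|F_{D_i'}|(y,1)\le Cr^{-2}$ for $y\in B_{r/4}(x)$. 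With $|F|$ now pointwise bounded, the Coulomb-gauge equation for $A$ has bounded coefficients, so parabolic $L^p$ and Schauder theory bootstraps to $\|\nabla_{D_i'}^k F_{D_i'}\|_{C^0(B_{r/4}(x))}\le Cr^{-k-2}$ by induction on $k$, the $r^{-k-2}$ scaling being forced by the conformal weight of $F$.

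\textbf{Main obstacle.} The delicate point is uniformity as $\alpha_i\to 1^+$: the principal coefficient $e_{\alpha_i}=(1+|F|^2)^{\alpha_i-1}$ depends on both the solution and on $\alpha_i$, and one has to check that the Moser iteration constants, the Uhlenbeck gauge constants, and the backward propagation estimate above do not degenerate as $\alpha_i\downarrow 1$. This requires keeping careful track of the size of the $O(\alpha_i-1)$ correction terms at each stage of the argument, which is exactly what is carried out in Lemma 4.6 of \cite{hong2015yang}.
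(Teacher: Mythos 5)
First, a point of comparison: the paper does not prove this statement at all --- it is imported verbatim, with citation, as Lemma 4.6 of \cite{hong2015yang}, so the only proof to measure your sketch against is the one in that reference, whose essential input your sketch omits.

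The genuine gap is the ``backward propagation'' step. Pairing the flow equation with a cutoff gives a localized energy inequality of the form
\[
\int_{B_{r/2}}\varphi^2\abs{F(t_2)}^2\,dv \;\leq\; \int_{B_r}\varphi^2\abs{F(t_1)}^2\,dv + C\,(t_2-t_1)\,r^{-2}E_0,\qquad t_1<t_2,
\]
i.e.\ smallness propagates \emph{forward} in time: the dissipation term $-\int\varphi^2\abs{D^*(e_\alpha F)}^2$ has the favorable sign only in that direction. Your displayed inequality bounds the energy at an \emph{earlier} time by the energy at $t=1$, and this does not follow from the energy identity alone; without it you have no small-energy cylinder ending at $t=1$ on which to run Moser iteration, and forward propagation from $t=1$ only yields estimates at times $>1$, not for $D_i'=D_i(1)$ itself. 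What rescues the argument --- and what the proof in \cite{hong2015yang} actually uses --- is that $D_i$ is a \emph{minimizing} sequence: since $\mathcal Y\mathcal M_{\alpha_i}$ is nonincreasing along the flow and $\mathcal Y\mathcal M_{\alpha_i}(D_i)\leq \mathcal Y\mathcal M(D_i)+V(M)+\tfrac1i$ while $\mathcal Y\mathcal M_{\alpha_i}(D_i(1))\geq \mathcal Y\mathcal M(D_i(1))+V(M)\geq \inf_{\mathcal C_{\eta,p_1}}\mathcal Y\mathcal M+V(M)$, the total dissipation $\int_0^1\!\!\int_M\abs{\partial_t A_i}^2$ tends to $0$ as $i\to\infty$. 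This near-constancy of the energy is precisely what transfers the smallness hypothesis at $t=1$ to nearby time slices (equivalently, what lets one treat $D_i'$ as an almost-Yang--Mills connection, $D^*(e_{\alpha_i}F)$ small in $L^2$ after choosing a good slice, and run an $\eps$-regularity argument with constants uniform in $i$). Your sketch uses the minimizing property only to get a uniform energy bound $E_0$, which is far too weak; as written the first displayed estimate, and hence the whole parabolic scheme, is unjustified. The remaining ingredients you list (gauge-invariant Bochner inequality for $\abs{F}^2$, Moser iteration, Coulomb-gauge bootstrap for higher derivatives, and uniformity of the $(\alpha_i-1)$-correction terms) are the right ones, though the Coulomb gauge is not needed for the curvature bound itself since the subsolution inequality is gauge invariant.
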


With this lemma, we apply the routine bubbling analysis to the sequence $D_i'$. As a consequence, we obtained a limiting Yang-Mills connection, $D_\infty$ on some bundle $E_{\eta,p_{1,\infty}}$. Moreover, there exists $l$ $SO(3)$-bundle over $S^4$ (with round metric), denoted by $E_{p_{1,1}},\dots, E_{p_{1,l}}$, and Yang-Mills connection $D_{b;1},\dots,D_{b,l}$ such that

\begin{enumerate}[(M1)]
	\item $D_\infty$ is a minimizer of $\mathcal Y \mathcal M$ energy in the class $\mathcal C_{\eta,p_{1,\infty}}$;

	\item $D_{b,j}$ is a minimizer among all smooth connections of $E_{p_{1,j}}$ for $j=1,\dots,l$;

	\item the first Pontryagin number of $E_{p_{1,j}}$ is $p_{1,j}$, that of $E_{\eta,p_{1,\infty}}$ is $p_{1,\infty}$ and we have
		\begin{equation}
	p_1= p_{1,\infty} + \sum_{j=1}^l p_{1,j};
			\label{eqn:p1inf}
		\end{equation}
\item the energy identity holds (see Proposition 4.7 in \cite{hong2015yang})
	\begin{equation}
\lim_{i\to \infty} \mathcal Y \mathcal M(D_i)= \mathcal Y \mathcal M(D_\infty) + \sum_{j=1}^l \mathcal Y \mathcal M(D_{b,j}).
		\label{eqn:ei}
	\end{equation}
\end{enumerate}

Since $D_{b,j}$ minimizes $\mathcal Y \mathcal M$ function on $E_{p_{1,j}}$, we know
\[
\mathcal Y \mathcal M(D_{b,j})= 4\pi^2 \abs{p_{1,j}}
\]
for $j=1,\dots,l$. In fact, by ADHM construction, we know there admit ASD/SD connections over $E_{p_{1,j}}$, hence $\mathcal Y \mathcal M(D_{b,j})$ is the minimal possible value dictated by \eqref{eqn:pontryagin}. 

As a corollary, we have
\begin{lem}
	\label{lem:cor} All $p_{1,j}$'s are of the same sign with $p_1-p_{1,\infty}$.
\end{lem}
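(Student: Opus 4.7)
The plan is to argue by contradiction using Theorem \ref{thm:tech} in one branch and a direct Taubes-type gluing in the other. Without loss of generality assume $p_1-p_{1,\infty}\geq 0$; the case $p_1-p_{1,\infty}\leq 0$ is the mirror image obtained by reversing the orientation of $M$ (swapping SD and ASD), and $p_1=p_{1,\infty}$ forces $\sum_j|p_{1,j}|=0$ by the energy identity \eqref{eqn:ei} together with the minimality $\mathcal Y\mathcal M(D_\infty)=\inf_{\mathcal C_{\eta,p_{1,\infty}}}\mathcal Y\mathcal M$, so there are no bubbles and the lemma is vacuous. Set $k=(p_1-p_{1,\infty})/4\in\mathbb N$. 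Combining $\mathcal Y\mathcal M(D_{b,j})=4\pi^2|p_{1,j}|$, the energy identity, and the minimality of $D_\infty$ yields
\[
\inf_{\mathcal C_{\eta,p_1}}\mathcal Y\mathcal M \;=\; \mathcal Y\mathcal M(D_\infty)+4\pi^2\sum_{j=1}^l |p_{1,j}|,
\]
so the conclusion of the lemma reduces to proving the equality $\sum_j|p_{1,j}|=\sum_j p_{1,j}$.

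I would then split according to whether $D_\infty$ is SD. In the non-SD case the orientation-reversed version of Theorem \ref{thm:tech} applies (obtained by repeating the construction of Section \ref{sec:test} with the 't Hooft SD bubble replaced by its ASD mirror): applied to $D_\infty$ it produces $\tilde D\in\mathcal C_{\eta,p_{1,\infty}+4k}=\mathcal C_{\eta,p_1}$ with
\[
\mathcal Y\mathcal M(\tilde D) \;<\; \mathcal Y\mathcal M(D_\infty)+16k\pi^2 \;=\; \mathcal Y\mathcal M(D_\infty)+4\pi^2(p_1-p_{1,\infty}).
\]
Feeding this strict inequality into the identity above forces $\sum_j|p_{1,j}|<\sum_j p_{1,j}$, which together with the triangle inequality $\sum_j|p_{1,j}|\geq\sum_j p_{1,j}$ is only consistent if every $p_{1,j}$ is $\geq 0$ (and, being nonzero, $>0$). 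In the SD case $\mathcal Y\mathcal M(D_\infty)=4\pi^2 p_{1,\infty}$ with $p_{1,\infty}\geq 0$; here I would construct $\tilde D$ by gluing $k$ copies of the unit-charge 't Hooft SD instanton at $k$ distinct points of $M$ with small common scale $\lambda$, using the standard cutoff construction of Section 7 of \cite{taubes1982self} or Section 7.2 of \cite{donaldson1990}. Since the bubble is trivial in $H^2(S^4,\mathbb Z_2)$ the glued connection lies in $\mathcal C_{\eta,p_1}$, with energy $\mathcal Y\mathcal M(D_\infty)+16k\pi^2+O(\lambda^2)=4\pi^2 p_1+O(\lambda^2)$. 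Letting $\lambda\to 0$ gives $\inf_{\mathcal C_{\eta,p_1}}\mathcal Y\mathcal M\leq 4\pi^2 p_1$, and the Chern-Weil absolute bound supplies the matching inequality $\geq 4\pi^2 p_1$; plugging the resulting equality back into the identity above yields $\sum_j|p_{1,j}|=\sum_j p_{1,j}$ directly.

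The main obstacle is the SD branch, where Theorem \ref{thm:tech} literally does not apply in either direction. One must fall back on the crude first-order Taubes gluing and combine it with the Chern-Weil lower bound to pin down $\inf_{\mathcal C_{\eta,p_1}}\mathcal Y\mathcal M$ exactly; the strict improvement of Theorem \ref{thm:tech} is unavailable here but also unnecessary, since the sign information is extracted via an equality rather than a strict inequality. A minor bookkeeping item to verify is that in the SD branch $p_{1,\infty}\geq 0$ is automatic, so $p_1\geq 0$ and the Chern-Weil bound points the correct way; and in both branches the deduced $p_{1,j}\geq 0$ upgrades to $p_{1,j}>0$ because bubbles carry strictly positive energy, matching the sign of $p_1-p_{1,\infty}>0$.
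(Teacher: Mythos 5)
Your argument is correct in substance, but it takes a genuinely different and heavier route than the paper. The paper's proof is a single soft contradiction: if the $p_{1,j}$ had mixed signs, then by \eqref{eqn:p1inf} one would have $\sum_j\abs{p_{1,j}}>\abs{\sum_j p_{1,j}}$, and gluing onto $D_\infty$ \emph{one} ADHM-supplied ASD/SD connection on the $S^4$-bundle carrying the net charge $\sum_j p_{1,j}$ --- the same crude $\varepsilon$-loss gluing you invoke in your SD branch --- yields a connection in $\mathcal C_{\eta,p_1}$ with energy strictly below $\mathcal Y\mathcal M(D_\infty)+\sum_j\mathcal Y\mathcal M(D_{b,j})$, contradicting the energy identity \eqref{eqn:ei} and the minimality of the sequence; no case distinction on whether $D_\infty$ is SD, no use of Theorem \ref{thm:tech}, and no Chern--Weil saturation are needed. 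Your route instead splits on SD-ness of $D_\infty$: in the non-SD branch you import the orientation-reversed Theorem \ref{thm:tech} (legitimate and non-circular, since the construction of Section \ref{sec:test} does not depend on this lemma, even though the lemma appears earlier in the paper), and in the SD branch you identify $\inf_{\mathcal C_{\eta,p_1}}\mathcal Y\mathcal M=4\pi^2 p_1$ by multi-instanton gluing plus the Chern--Weil lower bound. What your approach buys is in fact a stronger conclusion --- the non-SD branch shows that whenever $p_1>p_{1,\infty}$ the limit $D_\infty$ must be SD, essentially the dichotomy established later in Section \ref{sec:proof} --- at the cost of invoking the paper's main technical theorem for a preliminary lemma that only needs the soft gluing.

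One sentence should be repaired: in the non-SD branch you deduce $\sum_j\abs{p_{1,j}}<\sum_j p_{1,j}$ and assert this ``is only consistent if every $p_{1,j}\geq 0$''; it is never consistent --- it is an outright contradiction, showing that branch cannot occur, so the sign conclusion is carried entirely by the SD branch. The proof survives once that inference is restated, since a contradiction in one branch simply eliminates it. A small point you assume silently: $p_1-p_{1,\infty}=\sum_j p_{1,j}$ is divisible by $4$ because each bubble bundle over $S^4$ has vanishing $w_2$, which is what makes your $k\in\mathbb N$ well defined.
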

\begin{proof}
	If otherwise, \eqref{eqn:p1inf} implies that $\sum_{j=1}^l \abs{p_{1,j}}> \abs{\sum_{j=1}^lp_{1,j}}$. Let $D_b$ be an ASD(or SD) connection over
\[
E_{\sum_{j=1}^l p_{1,j}}:= \quad \text{the $SO(3)$-bundle over $S^4$ with Pontryagin number} \, \sum_{j=1}^l p_{1,j}, 
\]
whose existence is given by ADHM construction. Moreover, we have
\[
\mathcal Y \mathcal M(D_b)=4\pi^2 \abs{\sum_{j=1}^l p_{1,j}}.
\]
In case $p_1-p_{1,\infty}\ne 0$, by gluing $D_b$ and $D_\infty$ on the connected sum of $E_{\eta,p_{1,\infty}}$ and $E_{\sum_{j=1}^l p_{1,j}}$, for any $\varepsilon\in (0,1)$, we can find a connection $D'' \in \mathcal C_{\eta,p_1}$satisfying
\[
\mathcal Y \mathcal M(D'') \leq \mathcal Y \mathcal M(D_\infty) + 4\pi^2 \abs{\sum_{j=1}^l p_{1,j}} + \varepsilon < \mathcal Y \mathcal M(D_\infty)+ \sum_{j=1}^l \mathcal Y \mathcal M(D_{b,j}).
\]
This is a contradiction to \eqref{eqn:ei} and the fact that $D_i$ is a minimizing sequence in $\mathcal C_{\eta,p_1}$.
In case $p_1=p_{1,\infty}$, the left half of the above inequality holds by taking $D''=D_\infty$.
\end{proof}
\subsection{Standard bubble}\label{subsec:bubble}

On the $SU(2)$-bundle over $S^4$ with the second Chern class $c_2=1$, there is an ASD connection due to 't Hooft. Here we follow Section 3.4 of \cite{donaldson1990} for an explicit formulation of the connection.

By the conformal invariance of the problem, we write down the connection on $\Real^4$. Let $\mathbf{i}, \mathbf{j}$ and $\mathbf{k}$ be a standard basis of $\mathfrak s \mathfrak u(2)$. The connection one form is defined to be
\begin{equation}
	A= \frac{1}{1+\abs{x}^2} (\theta_1 \mathbf{i} + \theta_2 \mathbf{j} + \theta_3 \mathbf{k})
	\label{eqn:oldA}
\end{equation}
where
\begin{equation}
	\begin{split}
	\theta_1&= x_1 dx_2 - x_2dx_1 -x_3dx_4 + x_4 dx_3 \\
	\theta_2&= x_1 dx_3 - x_3dx_1 -x_4dx_2 + x_2 dx_4\\
	\theta_3&= x_1 dx_4 - x_4dx_1 -x_2dx_3 + x_3 dx_2.
	\end{split}
	\label{eqn:thetai}
\end{equation}
By direct computation,
\begin{equation}
	F= \left( \frac{1}{1+ \abs{x}^2} \right)^2 (d\theta_1 \mathbf{i} + d\theta_2 \mathbf{j} + d\theta_3 \mathbf{k}).
	\label{eqn:F}
\end{equation}
And it is obvious that $d\theta_i$'s are ASD $2$-forms.
In this paper, we are concerned with $SO(3)$-connections. With the isomorphism between $\mathfrak s \mathfrak o(3)$ and $\mathfrak s \mathfrak u(2)$, we may take the above as an ASD $SO(3)$-connection over $\Real^4$. It has $p_1=-4$ and the Yang-Mills energy equal to $16\pi^2$.

For our purpose, we would like to glue this standard bubble to a connection defined on some $4$-manifold $M$. For the definition in \eqref{eqn:oldA}, we used a global trivialization of the bundle over $\Real^4$. This trivialization does not agree (in the topological sense) with the one determined intrinsically by the curvature itself(near the infinity). To solve this problem, we consider a transformation on $\Real^4\setminus \set{0}$,
\begin{equation}
(x_1,x_2,x_3,x_4) \mapsto (\frac{x_1}{\abs{x}^2}, \frac{x_2}{\abs{x}^2}, \frac{x_4}{\abs{x}^2}, \frac{x_3}{\abs{x}^2}).
	\label{eqn:trans}
\end{equation}
Notice that we have purposedly reversed the order of $x_3$ and $x_4$ in order that the transformation keeps the orientation. By a pullback of \eqref{eqn:trans}, we obtain another way of looking at the same connection,
\begin{equation}
	A= \frac{1}{(1+ \abs{x}^2)\abs{x}^2}(\psi_1 \mathbf{i} + \psi_2 \mathbf{j} + \psi_3 \mathbf{k}),
	\label{eqn:newA}
\end{equation}
where
\begin{equation}
	\begin{split}
	\psi_1&= x_1 dx_2 - x_2dx_1 +x_3dx_4 - x_4 dx_3 \\
	\psi_2&= x_1 dx_3 - x_3dx_1 +x_4dx_2 - x_2 dx_4\\
	\psi_3&= x_1 dx_4 - x_4dx_1 +x_2dx_3 - x_3 dx_2.
	\end{split}
	\label{eqn:psii}
\end{equation}
There seems to be a singularity at $x=0$, but it is removable. 

In the rest of this paper, the notation $D_{stan}$ is used for the above connection on $\Real^4$ (with a removable singularity at the origin). Notice that we have chosen a trivialization on  $\set{\abs{x}>\delta}$ (for any $\delta>0$) in which the connection form is \eqref{eqn:newA}. If we use the removable singularity theorem again to regard $D_{stan}$ as a connection over $S^4$, then it lives on the bundle $E_{0,-4}(S^4)$ and has total energy $\mathcal Y \mathcal M(D_{stan})=16\pi^2$.

Next, we write $A$ in terms of the cylinder coordinates $(t,\omega)$, where $t\in \Real$ and $\omega \in S^3$. By definition,
\[
t= \log \abs{x}; \qquad \omega= \frac{x}{\abs{x}}\in S^3.
\]

We first notice that
\[
\theta_i(\partial_t) = \psi_i(\partial_t)=0
\]
and we denote by $\theta'_i$ and $\psi'_i$ the restrictions of $\theta_i$ and $\psi_i$ on $S^3\subset \Real^4$. If $\tilde{\Pi}$ is the  map from $\Real^4\setminus \set{0}$ to $S^3$ given by
\[
\tilde{\Pi}(t,\omega)=\omega,
\]
then by some abuse of notation, we also denote by $\theta'_i$ and $\psi'_i$ respectively
\[
\tilde{\Pi}^* \theta_i;\qquad \tilde{\Pi}^* \psi_i.
\]
If $T_s$ is the translating map on the cylinder, i.e. $T_s(t,\omega)=(t+s,\omega)$, then $\theta'_i$ and $\psi'_i$ are independent of $t$ in the sense that $(T_s)^* \theta'_i=\theta'_i$ and $(T_s)^*\psi'_i=\psi'_i$.

Moreover, if $\Pi$ is the coordinate change map given by
\[
\Pi(t,\omega)= e^t \omega,
\]
then 
\[
\Pi^*(\theta_i)=e^{2t} \theta'_i;\qquad \Pi^*(\psi_i)= e^{2t}\psi'_i.
\]
By a Taylor expansion, we obtain from \eqref{eqn:newA}
\begin{equation}
	\Pi^*(A)= e^{-2t} ( \psi_1' {\mathbf i} + \psi_2' {\mathbf j} + \psi_3' {\mathbf k}) + O(e^{-4t})
	\label{eqn:rightA}
\end{equation}
when $t\to \infty$.
This is the form of $D_{stan}$ that will be used in Section \ref{sec:test}.

\subsection{SD/ASD forms on cylinder}
In this section, we collect a few elementary computations which will be useful later. In the following lemma and the rest of this paper, we assume that the cylinder is oriented so that the map $\Pi$ is orientation preserving.
\begin{lem}\label{lem:twoforms}
	As two forms on cylinder, 
\[
d(e^{2t} \theta'_i)\quad \text{and} \quad d(e^{-2t}\psi_i') \quad \text{are ASD;}
\]
and
\[
d(e^{2t} \psi'_i)\quad \text{and} \quad d(e^{-2t}\theta_i') \quad \text{are SD.}
\]
\end{lem}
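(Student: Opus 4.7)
The plan is to transport (anti-)self-duality from $\mathbb{R}^4$ to the cylinder via conformal invariance. Recall that $\Pi(t,\omega)=e^t\omega$ satisfies $\Pi^*g_{\mathrm{Euc}}=e^{2t}(dt^2+g_{S^3})$, so it is an orientation-preserving conformal diffeomorphism $\mathbb{R}\times S^3\to\mathbb{R}^4\setminus\{0\}$. In dimension four the Hodge star on 2-forms is conformally invariant, hence $\Pi^*$ preserves both SD and ASD. Moreover, the map $\phi(t,\omega)=(-t,\omega)$ is an orientation-reversing isometry of the oriented cylinder, so $\phi^*$ interchanges SD and ASD.

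First I would handle the $\theta'_i$ case. By \eqref{eqn:F}, each $d\theta_i$ is an ASD 2-form on $\mathbb{R}^4$, and $\Pi^*\theta_i=e^{2t}\theta'_i$ gives $d(e^{2t}\theta'_i)=\Pi^*(d\theta_i)$, which is therefore ASD. The SD counterpart $d(e^{-2t}\theta'_i)=\phi^*d(e^{2t}\theta'_i)$ is then immediate.

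Next comes the $\psi'_i$ case. Since \eqref{eqn:newA} is the same standard bubble connection expressed via the orientation-preserving conformal self-diffeomorphism $\sigma$ of \eqref{eqn:trans}, its curvature $F$ is also ASD on $\mathbb{R}^4\setminus\{0\}$, hence $\Pi^*F$ is ASD on the cylinder. From \eqref{eqn:rightA},
\[
\Pi^*F \;=\; \sum_{i=1}^3 d(e^{-2t}\psi'_i)\,\mathbf{e}_i \;+\; O(e^{-4t}),
\]
where $\mathbf{e}_i\in\{\mathbf{i},\mathbf{j},\mathbf{k}\}$, because the $A\wedge A$ contribution in $F=dA+A\wedge A$ is $O(e^{-4t})$. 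The Hodge star on the cylinder preserves the grading by powers of $e^{-2kt}$: explicitly, for $t$-independent $\alpha\in\Omega^1(S^3)$ and $\beta\in\Omega^2(S^3)$ one has $*(dt\wedge\alpha)=*_{S^3}\alpha$ and $*\beta=dt\wedge *_{S^3}\beta$, both still $t$-independent. Hence the ASD condition on $\Pi^*F$ holds separately at each order in $e^{-2kt}$, and reading off the leading order gives that each $d(e^{-2t}\psi'_i)$ is ASD. The SD claim $d(e^{2t}\psi'_i)=\phi^*d(e^{-2t}\psi'_i)$ follows as before.

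The main obstacle is the justification in the $\psi'_i$ step that the leading-order coefficient of the asymptotic expansion of an ASD form is itself ASD; this depends on the mild but essential observation that the Hodge star on the cylinder respects the $e^{-2kt}$-grading. An alternative would be a direct pullback computation of $\sigma^*\theta_i$ (which, modulo an explicit permutation $\tau$ of $\{1,2,3\}$ coming from the $(x_3,x_4)$-swap, equals $\psi_{\tau(i)}/|x|^4$) together with conformal invariance of $\sigma$ itself, but this route obscures the geometric content behind book-keeping of $\mathfrak{su}(2)$ generators.
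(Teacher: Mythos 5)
Your proof is correct, and half of it coincides with the paper's: the paper also uses that $\Pi$ is an orientation-preserving conformal map (so pullback preserves duality type) and that $(t,\omega)\mapsto(-t,\omega)$ is an orientation-reversing isometry of the cylinder (so it swaps SD and ASD). Where you diverge is the $\psi'_i$ family. The paper handles it in one line: from \eqref{eqn:psii} one checks directly that $d\psi_1=2(dx_1\wedge dx_2+dx_3\wedge dx_4)$, etc., are SD on $\Real^4$, so $d(e^{2t}\psi'_i)=\Pi^*(d\psi_i)$ is SD, and the reflection then gives that $d(e^{-2t}\psi'_i)$ is ASD; no reference to the bubble connection is needed. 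You instead extract the ASD-ness of $d(e^{-2t}\psi'_i)$ from the ASD-ness of the curvature of \eqref{eqn:newA} via the asymptotic expansion \eqref{eqn:rightA} and the observation that the product Hodge star respects the $e^{-2kt}$-grading. This is valid, and it has the mild virtue of explaining \emph{why} these forms must be ASD (they are leading terms of an ASD curvature), but it is more roundabout and carries two small obligations that the direct check avoids: you must note that the SD parts of the coefficients of $\mathbf{i},\mathbf{j},\mathbf{k}$ vanish separately (linear independence in $\mathfrak{su}(2)$), and the statement ``the ASD condition holds at each order'' needs a one-line justification, e.g.\ the leading coefficient is $t$-independent while the error is $O(e^{-4t})$, so multiplying its SD part by $e^{2t}$ and letting $t\to\infty$ forces it to vanish (alternatively, use that $\Pi^*A$ is an exact convergent series in $e^{-2t}$ with $t$-independent coefficients). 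With either justification supplied, your argument is complete; the paper's route is simply the more elementary of the two.
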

\begin{proof}
	Since the map $\Pi$ is conformal, its tangent map is a scaling between tangent spaces. Hence, it pulls back SD forms to SD forms. By \eqref{eqn:thetai} and \eqref{eqn:psii}, it is straightforward to check that $d\theta_i$'s are ASD forms on $\Real^4$ and $d\psi_i$'s are SD forms.

	The other half of the claim follows from the observation that the map $(t,\omega)\mapsto (-t,\omega)$ is an isometry and orientation-reversing.
\end{proof}

For fixed $(t,\omega)$, the space of ASD two forms, $\Lambda_-^2$ is a real vector space of dimension $3$. So is the space of SD two forms $\Lambda_+^2$. It will be clear in a minute that both $\set{d(e^{2t}\theta'_i)}_{i=1,2,3}$ and $\set{d(e^{-2t}\psi'_i)}_{i=1,2,3}$ are two orthogonal bases of $\Lambda_-^2$. The transition matrix between these two bases can be computed explicitly and it satisfies certain property that we shall need later.

\begin{lem}
	\label{lem:twist} Direct computation shows
	\begin{equation}
\left( 
\begin{array}[]{c}
	d(e^{-2t}\psi'_1) \\
	d(e^{-2t}\psi'_2) \\
	d(e^{-2t}\psi'_3) \\
\end{array}
\right)		
=
e^{-4t}T
\left( 
\begin{array}[]{c}
	d(e^{2t}\theta'_1) \\
	d(e^{2t}\theta'_2) \\
	d(e^{2t}\theta'_3) \\
\end{array}
\right)
		\label{eqn:twist}
	\end{equation}
where
\begin{equation}
T=-2	
\left( 
\begin{array}[]{ccc}
	\frac{x_1^2+x_2^2-x_3^2-x_4^2}{2r^2} & \frac{x_2x_3-x_1x_4}{r^2} & \frac{x_1x_3+x_2x_4}{r^2} \\
	\frac{x_1x_4+x_2x_3}{r^2} & \frac{x_1^2+x_3^2-x_2^2-x_4^2}{2r^2} & \frac{x_3x_4-x_1x_2}{r^2} \\
	\frac{x_2x_4-x_1x_3}{r^2} & \frac{x_1x_2+x_3x_4}{r^2} & \frac{x_1^2+x_4^2-x_2^2-x_3^2}{2r^2}
\end{array}
\right).
	\label{eqn:T}
\end{equation}
In particular, for any fixed $t$,
\[
\int_{S^3} T d\omega=0.
\]
Here $d\omega$ is the volume form of the round metric on $S^3$.
\end{lem}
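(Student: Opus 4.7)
The first identity is a change-of-basis statement for the three-dimensional space of ASD two-forms at each point of the cylinder (Lemma~\ref{lem:twoforms}). I would transport everything to $\R^4\setminus\{0\}$ via the diffeomorphism $\Pi(t,\omega)=e^t\omega$, under which $e^{2t}\theta'_i$ corresponds to $\theta_i$ and $e^{-2t}\psi'_i$ corresponds to $r^{-4}\psi_i$ with $r=|x|$. In these coordinates, \eqref{eqn:twist} is equivalent, after multiplying through by $r^4$, to
\[
  d\psi_i - 4r^{-1}\,dr\wedge\psi_i \;=\; \sum_{j=1}^{3} T_{ij}\, d\theta_j,
\]
since $d(r^{-4}\psi_i)=-4r^{-5}\,dr\wedge\psi_i+r^{-4}\,d\psi_i$.

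The key structural observation is that $d\psi_i$ is SD while the $d\theta_j$ are ASD (as computed directly from \eqref{eqn:thetai},\eqref{eqn:psii}). Thus in the SD/ASD decomposition, the SD part of $-4r^{-1}\,dr\wedge\psi_i$ must cancel $d\psi_i$, while the ASD part produces $\sum_j T_{ij}d\theta_j$. Using $r\,dr=\sum_k x_k\,dx_k$ and \eqref{eqn:psii}, I would expand $dr\wedge\psi_i$ in the basis $\{dx_j\wedge dx_k\}$, then project onto the SD basis $\{dx_1\wedge dx_2+dx_3\wedge dx_4,\; dx_1\wedge dx_3-dx_2\wedge dx_4,\; dx_1\wedge dx_4+dx_2\wedge dx_3\}$ and the ASD basis $\{dx_1\wedge dx_2-dx_3\wedge dx_4,\; dx_1\wedge dx_3+dx_2\wedge dx_4,\; dx_1\wedge dx_4-dx_2\wedge dx_3\}$. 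Reading off the coefficients of the ASD part against $d\theta_j$ (which is twice the $j$-th ASD basis element) yields exactly the entries of \eqref{eqn:T}. This is the ``direct computation'' referenced in the statement; it is routine but the main obstacle is bookkeeping, and the appearance of the universal scalar $-2$ in $T$ comes from the factor $-4$ in $-4r^{-1}dr\wedge\psi_i$ being halved by the ASD-projection.

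For the integral claim, each entry of $T$ is homogeneous of degree $0$ in $x$, so its restriction to $S^3$ is a homogeneous quadratic polynomial in $x_1,\dots,x_4$. The off-diagonal entries are linear combinations of monomials $x_ix_j$ with $i\ne j$; these integrate to zero over $S^3$ by the reflection $x_i\mapsto-x_i$, which is an orientation-preserving isometry of $S^3$ when combined with $x_j\mapsto-x_j$. The diagonal entries have the form $(x_a^2+x_b^2-x_c^2-x_d^2)/2$ with $\{a,b,c,d\}=\{1,2,3,4\}$; since $\int_{S^3}x_i^2\,d\omega$ is independent of $i$ by the rotational symmetry of $S^3$, these also integrate to zero. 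Hence $\int_{S^3}T\,d\omega=0$ entrywise.
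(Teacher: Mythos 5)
Your proposal is correct and is essentially the paper's own proof: the paper likewise works in Euclidean coordinates via $\Pi$, writes $d(e^{-2t}\psi'_i)=-4e^{-6t}\Pi^*\bigl((r\,dr)\wedge\psi_i\bigr)+e^{-4t}d(e^{2t}\psi'_i)$, expands $(r\,dr)\wedge\psi_i$ in the SD/ASD basis so that the SD part cancels the $d\psi_i$ contribution, and reads off the ASD coefficients against $d(e^{2t}\theta'_j)$ (twice the $j$-th ASD basis element), while the integral claim is dismissed there as trivial by symmetry, which your monomial argument makes precise. One small repair to that last argument: to see $\int_{S^3}x_ix_j\,d\omega=0$ for $i\ne j$, use the single reflection $x_i\mapsto-x_i$ (any isometry preserves the volume measure, so orientation is irrelevant), or flip $x_i$ together with a third coordinate; the map you propose, flipping both $x_i$ and $x_j$, leaves the monomial $x_ix_j$ invariant and hence gives no conclusion.
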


%
%
\begin{proof}
The computation is easier in the Euclidean coordinates. By definition,
\begin{equation}
	\begin{split}
	d (e^{2t} \theta_1')=& 2 \Pi^* \left( dx_1\wedge dx_2 -dx_3\wedge dx_4 \right) \\
	d (e^{2t} \theta_2')=& 2 \Pi^* \left( dx_1\wedge dx_3 +dx_2\wedge dx_4 \right) \\
	d (e^{2t} \theta_3')=& 2 \Pi^* \left( dx_1\wedge dx_4 -dx_2\wedge dx_3 \right). 
	\end{split}
	\label{eqn:easier}
\end{equation}
\begin{rem}
	The orthogonality of $d(e^{2t}\theta'_1)$ and $d(e^{2t}\theta'_2)$ follows from that of $(dx_1\wedge dx_2-dx_3\wedge dx_4)$ and $(dx_1\wedge dx_3 + dx_2\wedge dx_4)$, which can be verified directly.	
\end{rem}
Exploiting the fact that
\[
d(e^{2t}\psi_1')= 2\Pi^* (dx_1\wedge dx_2 + dx_3\wedge dx_4),
\]
we compute
\begin{eqnarray*}
	d(e^{-2t}\psi_1')&=&  d\left( e^{-4t}\cdot e^{2t}\psi'_1 \right) \\
	&=& -4 e^{-4t} dt \wedge (e^{2t}\psi'_1) + e^{-4t} d \left( e^{2t}\psi'_1 \right)\\
	&=& -4 e^{-4t} \Pi^*\left( \frac{dr}{r}\wedge \psi_1 \right) +  2e^{-4t} \Pi^*(dx_1\wedge dx_2 + dx_3\wedge dx_4).\\
	&=& -4 e^{-6t} \Pi^*\left( (rdr)\wedge \psi_1 \right) +2 e^{-4t} \Pi^*(dx_1\wedge dx_2 + dx_3\wedge dx_4).\\
\end{eqnarray*}
Direct computation gives
\begin{eqnarray*}
(rdr)\wedge \psi_1 &=& (x_1dx_1+x_2dx_2+ x_3 dx_3+ x_4 dx_4)\wedge (x_1 dx_2 -x_2 dx_1 + x_3 dx_4-x_4dx_3) \\
&=& (x_1^2+x_2^2)(dx_1\wedge dx_2) + (x_3^2+ x_4^2) (dx_3\wedge dx_4) \\
&& + (x_2x_3-x_1x_4) (dx_1\wedge dx_3 + dx_2\wedge dx_4) +(x_1x_3+x_2x_4) (dx_1\wedge dx_4-dx_2\wedge dx_3),
\end{eqnarray*}
which implies that
\begin{eqnarray*}
	-\frac{1}{4} e^{4t} d(e^{-2t}\psi_1')&=& \frac{x_1^2+x_2^2-x_3^2-x_4^2}{2r^2} \Pi^*(dx_1\wedge dx_2 - dx_3\wedge dx_4) \\
	&&+ \frac{x_2x_3-x_1x_4}{r^2} \Pi^*(dx_1\wedge dx_3 + dx_2\wedge dx_4)\\
	&&+\frac{x_1x_3+x_2x_4}{r^2} \Pi^*(dx_1\wedge dx_4-dx_2\wedge dx_3).
\end{eqnarray*}
Similar computation yields
\begin{eqnarray*}
	-\frac{1}{4} e^{4t} d(e^{-2t}\psi_2')&=& \frac{x_1 x_4+x_2x_3}{r^2} (dx_1\wedge dx_2 - dx_3\wedge dx_4) \\
	&&+ \frac{x_1^2+x_3^2-x_2^2-x_4^2}{2r^2} (dx_1\wedge dx_3 + dx_2\wedge dx_4)\\
	&&+\frac{x_3x_4-x_1x_2}{r^2} (dx_1\wedge dx_4-dx_2\wedge dx_3)
\end{eqnarray*}
and
\begin{eqnarray*}
	-\frac{1}{4} e^{4t} d(e^{-2t}\psi_3')&=& \frac{x_2 x_4-x_1x_3}{r^2} (dx_1\wedge dx_2 - dx_3\wedge dx_4) \\
	&&+ \frac{x_1x_2+x_3x_4}{r^2} (dx_1\wedge dx_3 + dx_2\wedge dx_4)\\
	&&+\frac{x_1^2+x_4^2-x_2^2-x_3^2}{2r^2} (dx_1\wedge dx_4-dx_2\wedge dx_3).
\end{eqnarray*}
The final assertion about the integration of $T$ is trivial by symmetry.
\end{proof}

\subsection{Conformal normal coordinates}\label{subsec:normal}
Computations in the previous two subsections are valid on the standard metric of $\Real^4$ only. Notice that the concept of ASD/SD depends on the conformal class of the metric. Moreover, the problem of looking for Yang-Mills connection relies on the conformal class of the metric $g$ on $M$. 

For future use, we recall the existence and properties of the conformal normal coordinates.

\begin{thm}\label{thm:conformal}
	(Conformal normal coordinates). Let $M$ be a Riemannian manifold and $\bx\in M$. There is a conformal metric $g$ on $M$ such that
	\begin{equation}
		\det g_{ij}=1 + O(\abs{x}^3)
		\label{eqn:det}
	\end{equation}
	and
	\begin{equation}
		Ric(0)=0,
		\label{eqn:ric}
	\end{equation}
	where $x$ is the normal coordinates at $\bx$ with respect to $g$ and $Ric$ is the Ricci curvature.
	Moreover, there is the expansion
	\begin{equation}
		g_{pq}(x)= \delta_{pq} + \frac{1}{3} R_{pijq} x_ix_j + O(\abs{x}^3),
		\label{eqn:expg}
	\end{equation}
where $R_{pijq}$ is the Riemannian curvature tensor at $\bx$.
\end{thm}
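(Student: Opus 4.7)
The plan is to produce the desired metric as a conformal deformation $g = e^{2f} g_0$ of the given background metric $g_0$, where $f$ is a smooth function with $f(\bx)=0$ and $\nabla^{g_0} f(\bx)=0$. Since $g = g_0$ and all first derivatives of $f$ vanish at $\bx$, the tangent space structure at $\bx$ is unchanged, and the $g$-normal coordinates at $\bx$ agree with the $g_0$-normal coordinates up to $O(\abs{x}^3)$, so expansions in either set of coordinates match to the order required by the theorem. The only thing to be chosen is the Hessian of $f$ at $\bx$.

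Next I would use the standard transformation law for the Ricci tensor under a conformal change,
\[
\tilde R_{ij} = R_{ij} - (n-2)(\nabla_i\nabla_j f - \nabla_i f\,\nabla_j f) - (\Delta f + (n-2)\abs{\nabla f}^2) g_{ij}.
\]
Evaluated at $\bx$ where $\nabla f(\bx)=0$, this reduces to
\[
\tilde R_{ij}(\bx) = R_{ij}(\bx) - (n-2)\nabla_i\nabla_j f(\bx) - \Delta f(\bx)\,\delta_{ij}.
\]
Taking the trace gives $\Delta f(\bx) = R(\bx)/(2n-2)$, and then
\[
\nabla_i\nabla_j f(\bx) = \frac{1}{n-2}\Bigl(R_{ij}(\bx) - \tfrac{R(\bx)}{2n-2}\delta_{ij}\Bigr).
\]
With $n=4$ one can choose $f(x) = \tfrac12 A_{ij} x^i x^j$ in $g_0$-normal coordinates (extended smoothly to $M$) with $A_{ij}$ equal to the prescribed Hessian; this yields \eqref{eqn:ric}.

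With $Ric(\bx)=0$ in hand, the expansion \eqref{eqn:expg} is the well-known Taylor expansion of a Riemannian metric at the origin in its own normal coordinates; the quadratic term in $x$ is always $\tfrac13 R_{pijq}(\bx) x^i x^j$ up to sign convention, independent of any additional condition on the metric, since it comes from differentiating the Jacobi equation for geodesics through $\bx$. Formula \eqref{eqn:det} is then a direct consequence of \eqref{eqn:expg}: using $\det(I+B) = 1 + \operatorname{tr} B + O(\abs{B}^2)$ with $B_{pq} = \tfrac13 R_{pijq}(\bx) x^i x^j$, the trace is $\tfrac13 R_{ij}(\bx) x^i x^j$, which vanishes because the Ricci tensor of $g$ at $\bx$ is zero.

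The only subtle point is the compatibility between the two sets of normal coordinates (those of $g_0$ and those of $g$) used in the argument. Since $g(\bx) = g_0(\bx)$ and the Christoffel symbols of $g$ at $\bx$ are $\Gamma_{ij}^k(g)(\bx) = \Gamma_{ij}^k(g_0)(\bx) + \delta_i^k \partial_j f(\bx) + \delta_j^k \partial_i f(\bx) - g_0^{kl}\partial_l f(\bx) g_{0,ij}(\bx)$, and $\nabla f(\bx)=0$, the geodesic equations of $g$ and $g_0$ agree to first order at $\bx$. Hence the change of coordinates between the two normal charts is the identity up to $O(\abs{x}^3)$, which does not affect the quadratic expansion we claimed. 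This is the main subtlety of the proof; the rest is bookkeeping.
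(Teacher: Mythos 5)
Your argument is correct, but it is worth noting that the paper does not prove this statement at all: it simply refers to Theorem 5.1 of \cite{parker1987} (Lee--Parker), where the much stronger result $\det g_{ij}=1+O(\abs{x}^N)$ for arbitrary $N$ is established by an inductive choice of polynomial conformal factors that successively kill symmetrized covariant derivatives of the Ricci tensor at $\bx$. What you give instead is a direct, elementary proof of exactly the order-$3$ case: a conformal factor $e^{2f}$ with $f(\bx)=0$, $\nabla f(\bx)=0$ and Hessian prescribed via the conformal transformation law of Ricci achieves \eqref{eqn:ric}; then \eqref{eqn:expg} is the universal second-order Taylor expansion of any metric in its own normal coordinates, and \eqref{eqn:det} follows from $\det(I+B)=1+\operatorname{tr}B+O(\abs{B}^2)$ since $\operatorname{tr}B=\pm\tfrac13 R_{ij}x_ix_j=0$ at second order. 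This is sound (the trace computation $\Delta f(\bx)=R(\bx)/(2n-2)$, $\nabla_i\nabla_j f(\bx)=\tfrac1{n-2}(R_{ij}-\tfrac{R}{2n-2}\delta_{ij})$ is the standard one, valid since $n=4>2$), and it covers everything the paper actually uses, namely \eqref{eqn:det}, \eqref{eqn:ric} and \eqref{eqn:expg}; the citation buys the higher-order vanishing of $\det g-1$, which is not needed in this paper. One small remark: your closing paragraph on the compatibility of the $g_0$- and $g$-normal charts is not really needed, since \eqref{eqn:expg} and \eqref{eqn:det} are statements in the normal coordinates of $g$ itself, and the only input from the background chart is the value, gradient and Hessian of $f$ at $\bx$, which (given $\nabla f(\bx)=0$ and vanishing Christoffel symbols of $g_0$ at $\bx$) are unambiguous; the remark is correct but superfluous rather than, as you say, the main subtlety.
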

\begin{rem}
	In this paper, we adopt the summation convention that repeated indices are summed.
\end{rem}

For a proof, we refer to Theorem 5.1 of \cite{parker1987} and the proof therein.

Consider the scaling map $S_\lambda:\Real^4\to \Real^4$ defined by
\[
S_\lambda (x)= \frac{x}{\lambda}.
\]
For two positive numbers $\lambda$ and $\delta$ ($\lambda << \delta$), let $A$ be given in \eqref{eqn:newA}, then
\[
(S_{\lambda})^* A 
\]
defines a connection on $B_{\lambda \delta^{-1}}$, which we denote by $D_{stan;\lambda}$. In the rest of this section, we use Theorem \ref{thm:conformal} to compare the Yang-Mills energy of $D_{stan,\lambda}$ measured with metric $g$ and with the flat metric $g_e$. 

In what follows, we denote by $\mathcal Y \mathcal M(D,\Omega)$ the Yang-Mills energy of $D$ restricted to the domain $\Omega$. In case we want to emphasize the metric $g$ of the underline manifold, we use a subscript $g$.

\begin{lem}
	\label{lem:conformal}
	We have
	\begin{equation}
		\mathcal Y \mathcal M_g(D_{stan,\lambda},B_{\lambda\delta^{-1}}) = \mathcal Y \mathcal M_{g_e}(D_{stan},B_{\delta^{-1}})+ O(\lambda^3)
		\label{eqn:ymg}
	\end{equation}
	and
	\begin{equation}
		\mathcal Y \mathcal M_g(D_{stan,\lambda}, B_{\delta}\setminus B_{\lambda \delta^{-1}}) = \mathcal Y \mathcal M_{g_e}(D_{stan}, B_{\lambda^{-1} \delta}\setminus B_{\delta^{-1}}) + O(\lambda^3).	
		\label{eqn:ymneck}
	\end{equation}
\end{lem}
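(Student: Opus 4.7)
The plan is to exploit the conformal invariance of the Yang-Mills integrand in dimension four to reduce each identity to a local comparison between $g$ and $g_e$ on the same domain, and then to show that although $g-g_e$ is pointwise of size $O(\abs{x}^2)$, its contribution to $\mathcal Y \mathcal M$ is only $O(\lambda^3)$ after integration against the concentrated density $\abs{F_{D_{stan,\lambda}}}^2$. Since $\abs{F}^2 dV$ is conformally invariant in four dimensions, the scaling $S_\lambda$ immediately gives $\mathcal Y \mathcal M_{g_e}(D_{stan,\lambda},B_{\lambda\delta^{-1}}) = \mathcal Y \mathcal M_{g_e}(D_{stan},B_{\delta^{-1}})$ and an analogous identity for the annulus $B_\delta \setminus B_{\lambda\delta^{-1}}$. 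It therefore suffices to prove, on each domain $\Omega$, that
\begin{equation*}
\mathcal Y \mathcal M_g(D_{stan,\lambda},\Omega) - \mathcal Y \mathcal M_{g_e}(D_{stan,\lambda},\Omega) = O(\lambda^3).
\end{equation*}

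The heart of the argument is the pointwise bound
\begin{equation*}
\left| \abs{F_{D_{stan,\lambda}}}^2_g(x)\sqrt{\det g(x)} - \abs{F_{D_{stan,\lambda}}}^2_{g_e}(x) \right| \leq C \abs{x}^3 \abs{F_{D_{stan,\lambda}}}^2_{g_e}(x).
\end{equation*}
Writing $g^{ij}(x) = \delta^{ij} - \tfrac{1}{3}R_{ikjl}x_k x_l + O(\abs{x}^3)$ and expanding, the leading $O(\abs{x}^2)$ part of $\abs{F}^2_g - \abs{F}^2_{g_e}$ is $2(g^{ik}-\delta^{ik}) F^a_{ij} F^a_k{}^{j}$. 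The key observation is that for the 't Hooft connection the curvature \eqref{eqn:F} has the form $F_{D_{stan}} = \tfrac{2}{(1+\abs{y}^2)^2}(\omega^1\mathbf{i} + \omega^2\mathbf{j} + \omega^3\mathbf{k})$ with $\{\omega^\alpha\}$ an orthonormal basis of $\Lambda^2_-$; a direct computation, summing over both the Lie-algebra index $a$ and the 2-form basis, gives $F^a_{ij} F^a_k{}^{j} = \tfrac{1}{2}\abs{F}^2 \delta_{ik}$. The $O(\abs{x}^2)$ correction therefore collapses to $\abs{F}^2 \sum_i (g^{ii}(x) - 1)$, and by \eqref{eqn:ric} and \eqref{eqn:det} this trace equals $-\tfrac{1}{3} Ric_{kl}(0) x_k x_l + O(\abs{x}^3) = O(\abs{x}^3)$. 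Together with the $O(\abs{x}^3)$ error from $\sqrt{\det g}-1$, this yields the claimed pointwise bound.

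To conclude, I change variables $x = \lambda y$ and use the conformal identity $\abs{F_{D_{stan,\lambda}}}^2_{g_e}(x)\,dx = \abs{F_{D_{stan}}}^2_{g_e}(y)\,dy$ with $\abs{F_{D_{stan}}}^2_{g_e}(y) = 24/(1+\abs{y}^2)^4$ to obtain
\begin{equation*}
\int_\Omega \abs{x}^3 \abs{F_{D_{stan,\lambda}}}^2_{g_e}\,dx = \lambda^3 \int_{S_\lambda(\Omega)} \frac{24\abs{y}^3}{(1+\abs{y}^2)^4}\,dy.
\end{equation*}
For \eqref{eqn:ymg}, $S_\lambda(B_{\lambda\delta^{-1}}) = B_{\delta^{-1}}$ and the remaining integral is an absolute constant; for \eqref{eqn:ymneck}, the image is the annulus $B_{\lambda^{-1}\delta}\setminus B_{\delta^{-1}}$, on which the integrand decays like $\abs{y}^{-5}$ at infinity, so the integral is uniformly bounded in $\lambda$ and $\delta$. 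Both cases give the desired $O(\lambda^3)$ error.

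The main obstacle lies in the second step: a naive estimate using only the $O(\abs{x}^2)$ size of $g-g_e$ would yield $O(\lambda^2)$, which is not sharp enough for the applications in Section \ref{sec:test}. The improvement to $O(\lambda^3)$ requires two ingredients working together: (i) the instanton's curvature being a combination of orthonormal ASD two-forms with equal weights, which forces $F^a_{ij}F^a_k{}^{j}$ to be pure trace and therefore kills the entire trace-free part of $g-g_e$; and (ii) the conformal normal coordinate condition $Ric(0)=0$, which upgrades the surviving trace of $g-g_e$ from $O(\abs{x}^2)$ to $O(\abs{x}^3)$. Neither ingredient alone suffices.
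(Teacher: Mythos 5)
Your argument is correct and is essentially the paper's own proof: the same two ingredients carry it, namely the pure-trace identity $F^a_{pi}F^a_{qi}\propto\abs{F}^2\delta_{pq}$ for the instanton curvature (which kills the quadratic, trace-free part of $g-g_e$) combined with the conformal normal coordinate facts $Ric(0)=0$ and $\det g=1+O(\abs{x}^3)$, followed by integrating the remaining $O(\abs{x}^3)\abs{F_{D_{stan,\lambda}}}^2$ error against the decay $\abs{F_{D_{stan}}}\leq C\abs{y}^{-4}$. Your packaging---one pointwise bound applied to both regions and a change of variables, instead of the paper's rescaled metric $g_\lambda$ on $B_{\delta^{-1}}$ for \eqref{eqn:ymg} and the $I_1,I_2$ split on the neck for \eqref{eqn:ymneck}---is only a cosmetic reorganization of the same computation.
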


\begin{rem}
	In comparison with \eqref{eqn:expg}, the $O(\lambda^3)$ above is unexpected. Indeed, this is why we need conformal normal coordinates, in particular, \eqref{eqn:ric}. 
\end{rem}

\begin{proof}
	Notice that $S_{\lambda^{-1}}$ maps $B_{\delta^{-1}}$ to $B_{\lambda \delta^{-1}}$. Set
\[
g_\lambda= \lambda^{-2} (S_{\lambda^{-1}})^* g.
\]

For the proof of \eqref{eqn:ymg}, we use \eqref{eqn:expg} to derive the following expansion
\[
(g_\lambda)_{pq}= \delta_{pq} + \frac{1}{3} \lambda^2 R_{pijq} x_ix_j + O(\lambda^3).
\]
This holds uniformly for $x\in B_{\delta^{-1}}$ in the sense that the constant in the definition of $O(\lambda^3)$ depends on $\delta$.

Hence,
\begin{equation}
(g_\lambda)^{pq}= \delta_{pq} - \frac{1}{3} \lambda^2 R_{pijq} x_ix_j + O(\lambda^3).
	\label{eqn:gpq}
\end{equation}
By the scaling invariance of Yang-Mills energy, we have
\begin{eqnarray*}
	&&\mathcal Y \mathcal M_g(D_{stan,\lambda},B_{\lambda \delta^{-1}}) \\
	&=& \mathcal Y \mathcal M_{g_\lambda} (D_{stan},B_{\delta^{-1}}) \\
	&=& \int_{B_{\delta^{-1}}} \left( F_{D_{stan}}, F_{D_{stan}} \right)_{\Lambda^2(g_\lambda)\otimes \mathfrak g} dV_{g_\lambda}\\
	&=& \mathcal Y \mathcal M_{g_e}(D_{stan}, B_{\delta^{-1}}) + \int_{B_{\delta^{-1}}} (-\frac{2}{3} \lambda^2 R_{pijq}x_ix_j) (F_{ps}, F_{qs})_{\mathfrak g} dx + O(\lambda^3).
\end{eqnarray*}
Here we have used \eqref{eqn:gpq} and \eqref{eqn:det}. Hence it remains to show that the second term in the above line vanishes. We shall verify that for each point $x\in B_{\delta^{-1}}$, the integrand is zero. More precisely, we claim that for any fixed $x$,
\begin{equation}
	R_{pijq}(F_{ps},F_{qs})_{\mathfrak g}=0.
	\label{eqn:claim}
\end{equation}
This requires a special property of $D_{stan}$. 

By \eqref{eqn:F}, this $F$, regarded as a linear map from $\Lambda^2_-$ to $\mathfrak s \mathfrak o(3)$, maps an orthogonal basis of $\Lambda^2_-$ to an orthogonal basis of $\mathfrak s \mathfrak o(3)$. More precisely, it maps $d\theta_1$ to a constant multiple ${\mathbf i}$ and so on. This property is preserved when we pull back the connection by an orientation-preserving conformal map. Indeed, its induced map from $\Lambda^2_-$ to $\Lambda^2_-$ is a composition of scaling and rotation. Since \eqref{eqn:newA} is obtained from \eqref{eqn:oldA} by such a pullback, we may assume 
\[
F_{D_{stan}}= \eta(x) \left( \omega_1 \tilde{\mathbf i} + \omega_2 \tilde{\mathbf j} + \omega_3 \tilde{\mathbf k} \right)
\]
where $(\tilde{\mathbf i},\tilde{\mathbf j}, \tilde{\mathbf k})$ is an orthogonal basis of $\mathfrak s \mathfrak o(3)$ and $(\omega_1,\omega_2,\omega_3)$ is an orthogonal basis of $\Lambda^2_-$ (measured with $g_e$). We may rotate the normal coordinates by an action of $SO(4)$. Since the induced action on $\Lambda^2_-$ could be any $SO(3)$, we may assume that $\omega_j=\pm d\theta_j$ for $j=1,2,3$. If we set $d\theta_k= G^{(k)}_{ij}dx_i\wedge dx_j$, then the matrices $G^{(k)}_{ij}$ are (from $k=1$ to $3$)
\[
\left( 
\begin{array}[]{cccc}
	0 & 1 & 0 & 0 \\
	-1 & 0 & 0 & 0 \\
	0 & 0 & 0 & -1 \\
	0 & 0 & 1 & 0
\end{array}
\right),
\left( 
\begin{array}[]{cccc}
	0 & 0 & 1 & 0 \\
	0 & 0 & 0 & 1 \\
	-1 & 0 & 0 & 0 \\
	0 & -1 & 0 & 0
\end{array}
\right),
\left( 
\begin{array}[]{cccc}
	0 & 0 & 0 & 1 \\
	0 & 0 & -1 & 0 \\
	0 & 1 & 0 & 0 \\
	-1 & 0 & 0 & 0
\end{array}
\right).
\]
It is straightforward to check that for any $k$
\[
G^{(k)}_{ps} G^{(k)}_{qs}= \delta_{pq}.
\]
With this observation, we may compute explicitly
\begin{eqnarray*}
	(F_{ps},F_{qs})_{\mathfrak g}= c \eta(x)^2 \delta_{pq}
\end{eqnarray*}
for some constant $c$.
Hence, the proof of \eqref{eqn:ymg} is completed by noticing that $R_{pijq} \delta_{pq}=0$ due to \eqref{eqn:ric}.

By applying the pullback of \eqref{eqn:trans} to \eqref{eqn:F}, we may estimate (for some universal $c$)
\begin{equation*}
	\abs{F_{D_{stan}}} \leq  \frac{c}{\abs{x}^4}
\end{equation*}
for $\abs{x}\geq \delta^{-1}$. Hence,
\begin{equation}
	\abs{F_{D_{stan,\lambda}}} \leq \frac{c\lambda^2}{ \abs{x}^4}
	\label{eqn:Fstan}
\end{equation}
for $\abs{x}\geq \lambda \delta^{-1}$.

Hence, we may compute
\begin{eqnarray*}
	&& \mathcal Y \mathcal M_{g} (D_{stan,\lambda}, B_\delta\setminus B_{\lambda \delta^{-1}}) - \mathcal Y \mathcal M_{g_e} (D_{stan}, B_{\lambda^{-1}\delta}\setminus B_{\delta^{-1}}) \\
	 &=& \mathcal Y \mathcal M_{g} (D_{stan,\lambda}, B_\delta\setminus B_{\lambda \delta^{-1}}) - \mathcal Y \mathcal M_{g_e} (D_{stan,\lambda}, B_\delta\setminus B_{\lambda \delta^{-1}}) \\
	 &=& \int_{B_\delta\setminus B_{\lambda \delta^{-1}}} (F_{D_{stan,\lambda}}, F_{D_{stan,\lambda}})_{\Lambda^2(g)\otimes \mathfrak g} dV_g - \int_{B_\delta\setminus B_{\lambda \delta^{-1}}} (F_{D_{stan,\lambda}}, F_{D_{stan,\lambda}})_{\Lambda^2(g_e)\otimes \mathfrak g} dV_{g_e}\\
	 &=& \int_{B_\delta\setminus B_{\lambda \delta^{-1}}} (F_{D_{stan,\lambda}}, F_{D_{stan,\lambda}})_{\Lambda^2(g)\otimes \mathfrak g} (dV_g-dV_{g_e}) \\
	 && + \int_{B_\delta\setminus B_{\lambda \delta^{-1}}} (F_{D_{stan,\lambda}}, F_{D_{stan,\lambda}})_{\Lambda^2(g)\otimes \mathfrak g} - (F_{D_{stan,\lambda}}, F_{D_{stan,\lambda}})_{\Lambda^2(g_e)\otimes \mathfrak g} dV_{g_e}\\
	 &:=& I_1 + I_2.
\end{eqnarray*}
By \eqref{eqn:Fstan} and \eqref{eqn:det}, we get
\begin{equation}
\abs{I_1}\leq C(\delta) \int_{B_\delta\setminus B_{\lambda \delta^{-1}}} \frac{\lambda^4}{ \abs{x}^8} \abs{x}^3 dV_{g_e} \leq C(\delta) \lambda^3.
	\label{eqn:i1}
\end{equation}
If we set $h_{pq}(x)=-\frac{1}{3} R_{pijq}x_ix_j$, we get from \eqref{eqn:expg}
\[
g^{pq}(x)= \delta_{pq} +h_{pq}(x) + O(\abs{x}^3).
\]
We use the bilinearity of $(F_{D_{stan,\lambda},F_{D_{stan,\lambda}}})_{\Lambda^2(g)\otimes \mathfrak g}$ with respect to the inverse of $g$ and the fact that
\[
\abs{g^{pq}-\delta_{pq}}\leq C \abs{x}^2
\]
to obtain
\begin{eqnarray*}
	I_2&=&  2 \int_{B_\delta\setminus B_{\lambda \delta^{-1}}} h_{pq} (F_{ps}, F_{qs})_{\mathfrak g} dV_{g_e} + \int_{B_\delta\setminus B_{\lambda \delta^{-1}}} O(\abs{x}^3) \abs{F_{D_{stan,\lambda}}}^2_{g_e} dV_{g_e}.
\end{eqnarray*}
The first term in the above equation vanishes because of the same proof of \eqref{eqn:claim} in the previous part of the proof, and the second term is estimated as $I_1$ in \eqref{eqn:i1}. Hence,
\[
\abs{I_2}\leq C(\delta) \lambda^3.
\]
The proof is done.
\end{proof}

\section{Construction of test connection}\label{sec:test}

Let $M$ be a closed oriented $4$-manifold and $E_{\eta,p_1}$ is some $SO(3)$-bundle. Let $D$ be a smooth connection in $\mathcal C_{\eta,p_1}$. Assume that there is some ${\bf x}_0\in M$ such that
\[
	(F_D)^+({\bf x}_0)\ne 0.
\]

The aim of this section is to show that by gluing a standard bubble at ${\bf x}_0$, we obtain a connection $D'$ on the bundle $E_{\eta,p_1-4}$ such that
\begin{equation}
	\mathcal Y \mathcal M(D')< \mathcal Y \mathcal M(D) + 16\pi^2.
	\label{eqn:goal}
\end{equation}
It will be clear in the construction below that for some small $\delta>0$, $D'$ and $D$ restricted to $M\setminus B_{\delta}(\bx_0)$ are identical. Hence, Theorem \ref{thm:tech} follows by taking $k$ different points $\bx_l$ ($l=1,\dots,k$) with $(F_D)^+(\bx_l)\ne 0$, taking small $\delta$ so that $B_\delta(\bx_l)$'s are disjoint and repeating the construction in each $B_\delta(\bx_l)$.

\subsection{Preparation}
Take a {\bf positively oriented} conformal normal coordinate system $(x_1,\dots,x_4)$ around ${\bf x}_0$ (as in Section \ref{subsec:normal}). Fix an orthonormal frame $\set{e_1,e_2,e_3}$ on the fiber over ${\bf x}_0$ and extend it to a neighborhood of ${\bf x}_0$ by parallel transportation along the curve
\[
	t\mapsto (tx_1,tx_2,tx_3,tx_4).
\]
Using this local frame, the connection $D$ is represented by a matrix valued one form
\[
	A= A_i(x) dx_i.
\]
By definition, we have $A_i(0)=0$ and therefore there exist constant matrices $C_{ij}$ such that
\begin{equation}
	A_{i}(x)= C_{ij} x_j  + O(\abs{x}^2).
	\label{eqn:expandA}
\end{equation}
By the formula
\[
F_{ij}=\pfrac{A_j}{x_i}- \pfrac{A_i}{x_j} + [A_i,A_j],
\]
we have
\[
F_{ij}(x)= \left( C_{ji}-C_{ij} \right) + O(\abs{x}).
\]
Hence,
\[
F_{ij}(0)= C_{ji}-C_{ij}.
\]
On the other hand, by the definition of local frame again, $A(\partial_r)=0$, where $r^2=\sum_{i=1}^4 x_i^2$, i.e.
\[
C_{kj} x_k x_j=0.
\]
This means that $C$ is skew-symmetric and hence,
\[
A= \frac{1}{2} F_{ji}(0)x_j  dx_i + O(\abs{x}^2)dx_* = \frac{1}{2}\sum_{i<j} F_{ij}(x_i dx_j -x_jdx_i) + O(\abs{x}^2) dx_*.
\]
The leading term in the expansion of $A$ is responsible for the curvature form at ${\bf x}_0$.

To introduce the SD and ASD decomposition, recall the definitions of $\theta_i$ and $\psi_i$ in \eqref{eqn:thetai} and \eqref{eqn:psii}. Obviously,  with respect to the flat metric $g_e$, $d\theta_i$ are ASD $2$-forms and $d\psi_i$ are SD $2$-forms.
By noticing simple algebra relation such as
\begin{equation*}
	(x_1 dx_2 - x_2 dx_1)= \frac{1}{2}(\theta_1+\psi_1) \quad \text{etc,}
\end{equation*}
we obtain a decomposition
\[
A=\sum_{i=1,2,3} \left( F_{-,i}\theta_i + F_{+,i}\psi_i\right) + O(\abs{x}^2)dx_*,
\]
where $F_{-,i}$ and $F_{+,i}$ are constant matrices. Moreover, the curvature $F_D$ at ${\bf x}_0$ is SD(ASD) if and only if $F_{-,i}$($F_{+,i}$) vanishes.

Finally, recalling the definition of $\Pi(t,\omega)=e^t \omega$, we pullback $A$ to be a one form defined on cylinder
\begin{equation}
\Pi^*(A)= \sum_{i=1,2,3} \left( F_{-,i} e^{2t}\theta'_i + F_{+,i} e^{2t}\psi'_i \right) + O(e^{3t}).
	\label{eqn:leftA}
\end{equation}

\subsection{Gluing} \label{subsec:glue}

We need to introduce two parameters: $\delta>0$ and $\lambda>0$.  The first one describes the size of the neighborhood where the gluing happens. For the connection $D$, the gluing occurs inside the ball $B_\delta(0)$ (in terms of the normal coordinates chosen in the previous subsection); for the standard bubble, the gluing occurs outside the ball $B_{\delta^{-1}}(0)$ in the $x$ coordinates. The second parameter $\lambda$ is a scaling factor and it is a lot smaller than $\delta$. We will scale down the standard bubble by $\lambda$ and attach it to the ball $B_{\lambda \delta^{-1}}(0)$ (normal coordinates again) on $M$. 

\begin{rem}
	If one takes this gluing as a reverse process of bubbling, then this $\lambda$ is usually understood as the scale of energy concentration.
\end{rem}

Before we proceed, we describe the setting of the computations that follows. We have a long cylinder $[\log \lambda -\log \delta, \log \delta]\times S^3$, to the left ($t=\log \delta$) of which we attach the manifold $M$ (with $B_\delta(0)$ removed) together with the bundle. To the right end ($t=\log \lambda -\log \delta$), we attach a scaled (by $\lambda$) version of standard bubble (in $x$ coordinates). Moreover, to be precise, we need to specify how we glue the bundles. On $B_\delta(0)\subset M$, we have chosen a trivialization by parallel transportation, in which \eqref{eqn:leftA} holds. On a neighborhood of infinity $(\abs{x}>\delta^{-1})$ of the standard bubble, there is a trivialization in which we have \eqref{eqn:rightA}. In the following computation, we will need to rotate the second trivialization by a constant element in $SO(3)$, before we identify the two trivializations. It turns out that we need this rotation to move the curvature form (at $x=\infty$) into a favorable position(see \eqref{eqn:P} and Lemma \ref{lem:trivial}). Keep in mind that in \eqref{eqn:rightA}, we have the freedom of choosing a standard basis of $\mathfrak s \mathfrak u(2)$.

To proceed, we rewrite \eqref{eqn:rightA} (after a scaling by parameter $\lambda$)
\begin{equation}
	A_{right}:=\lambda^2 e^{-2t} \left( \psi_1' {\mathbf i} +\psi_2' {\mathbf j} + \psi_3' {\mathbf k}\right) + w_r
	\label{eqn:Aright}
\end{equation}
where $w_r$ (as a one form on cylinder) satisfies
\begin{equation}
	\sup_{t>\log \lambda - \log \delta} \norm{w_r}_{C^2( [t,t+1]\times S^3)} \leq C \lambda^4 e^{-4t}.
	\label{eqn:Wright}
\end{equation}
On the other hand, \eqref{eqn:leftA} is rewritten as
\begin{equation}
	A_{left}:= \sum_{i=1,2,3} e^{2t}(F_{-,i}\theta_i' + F_{+,i} \psi_i') + w_l
	\label{eqn:Aleft}
\end{equation}
where
\begin{equation}
	\sup_{t<\log \delta} \norm{w_l}_{C^2( [t,t+1]\times S^3)}\leq C e^{3t}.
	\label{eqn:Wleft}
\end{equation}

To have a smooth transition, we need the cut-off functions $\varphi_1$, $\varphi_2$, $\varphi_3$ and $\varphi_4$ as follows. They are functions defined on the cylinder $[\log \lambda -\log \delta, \log \delta]\times S^3$ and they depend on $t$ alone.

(C1) $\varphi_1(t,\theta)$ is supposed to be $1$ for all $t> \frac{1}{2}\log \lambda+1$ and $0$ for all $t<\frac{1}{2}\log \lambda -1$. Let $\varphi_2=1-\varphi_1$;

(C2) let $\varphi_3(t,\theta)$ to be $1$ for all $t> \log \lambda -\log \delta +2$ and $0$ for $t<\log \lambda -\log \delta+1$;

(C3) let $\varphi_4(t,\theta)= \varphi_3(\log \lambda - t,\theta)$.

Finally, we set
\begin{equation}
	\begin{split}
		A_{(\lambda)}:=& \varphi_3(t) e^{2t}  \sum_{i=1,2,3} (F_{-,i}\theta_i' + F_{+,i} \psi_i')\\
		& + \varphi_4(t)\lambda^2 e^{-2t} \left( \psi_1' {\mathbf i} +\psi_2' {\mathbf j} + \psi_3' {\mathbf k} \right) \\
		& + \varphi_1(t) w_l + \varphi_2(t) w_r
	\end{split}
	\label{eqn:defA}
\end{equation}
and define a new connection $D'$, which is 
\begin{itemize}
	\item $D$ on $M\setminus B_\delta$;
	\item $A_{(\lambda)}$ (in the trivialization discussed in the beginning of Section \ref{subsec:glue}) on $B_\delta\setminus B_{\lambda \delta^{-1}}$;
	\item $D_{stan,\lambda}$ inside $B_{\lambda \delta^{-1}}$.
\end{itemize}

Since the ASD connection given in Section \ref{sec:pre} has Pontryagin number equal to $-4$, the new connection lives on $E_{\eta,p_1-4}$. It remains to compute its Yang-Mills energy and compare it with $\mathcal Y \mathcal M(D)+16\pi^2$ (see \eqref{eqn:goal}).

\subsection{Energy estimate}

Since the total energy of $D_{stan}$ on the flat $\Real^4$ is $16\pi^2$, we have
\begin{eqnarray*}
	&& \mathcal Y \mathcal M(D')- \mathcal Y \mathcal M(D) \\
	&=& \mathcal Y \mathcal M(D', B_\delta) - \mathcal Y \mathcal M(D, B_\delta) \\
	&=& \mathcal Y \mathcal M_g (D', B_\delta \setminus B_{\lambda \delta^{-1}}) + \mathcal Y \mathcal M_g(D_{stan,\lambda}, B_{\lambda \delta^{-1}}) - \mathcal Y \mathcal M(D,B_\delta) \\
	&=& E_{gain}- E_{loss} + 16 \pi^2 \\
	&& + \mathcal Y \mathcal M_g(D_{stan,\lambda}, B_{\lambda \delta^{-1}})- \mathcal Y \mathcal M_{g_e}(D_{stan}, B_{\delta^{-1}}),
\end{eqnarray*}
where
\[
E_{loss}:=\mathcal Y \mathcal M_g(D, B_\delta) + \mathcal Y \mathcal M_{g_e}(D_{stan}, \Real^4\setminus B_{\delta^{-1}})
\]
and
\[
	E_{gain}:=\mathcal Y \mathcal M_g(D', B_\delta\setminus B_{\lambda \delta^{-1}}).
\]

Lemma \ref{lem:conformal} implies that to prove \eqref{eqn:goal}, it suffices to prove
\begin{equation}
	E_{gain}-E_{loss}= C \lambda^2 + \varepsilon(\delta) \lambda^2
	\label{eqn:aim}
\end{equation}
for some {\bf negative} constant $C$, where $\varepsilon(\delta)$ is a small constant depending on $\delta$ satisfying $\lim_{\delta\to 0} \varepsilon(\delta)=0$.

\begin{rem}
	In what follows, we will use $\varepsilon(\delta)$ for other constants satisfying the same requirement.
\end{rem}

First, notice that $D$ and $D_{stan}$ are fixed (independent of $\lambda$), hence
\begin{equation}
	E_{loss}:=\mathcal Y \mathcal M_g(D, B_\delta\setminus B_{\lambda \delta^{-1}}) + \mathcal Y \mathcal M_{g_e}(D_{stan}, B_{\lambda^{-1} \delta}\setminus B_{\delta^{-1}}) + O(\lambda^4).
	\label{eqn:loss}
\end{equation}
Hence the comparison occurs on the neck domain $B_\delta \setminus B_{\lambda \delta^{-1}}$ and we shall work with the cylinder coordinates $(t,\omega)$. The Euclidean metric $g_e$ is conformal to the cylinder metric $g_{cy}:=dt^2+ d\omega^2$. We will write $d\omega dt$ for the volume form and use $\abs{\cdot}$ and $\langle \cdot, \cdot \rangle $ for the norm and inner product with respect to $g_{cy}$. In $(t,\omega)$ coordinates, the conformal metric $g_c:=\abs{x}^{-2}g$ has the form
\[
g_c=dt^2 + g_\omega(t)
\]
where $g_{\omega}(t)$ is a family of metric on $S^3$ that tends to the round metric on $S^3$ when $t\to -\infty$, i.e. for $t\leq \log \delta$,
\begin{equation}
	\abs{g_c- g_{cy}}_{g_{cy}}\leq C e^t.
	\label{eqn:smaller}
\end{equation}
We use $\abs{\cdot}_{g_c}$, $\langle \cdot, \cdot \rangle_{g_c}$ for the norm and inner product with respect to the metric $g_c$ and write $(d\omega dt)_{g_c}$ for its volume form.

\begin{rem}
	Notice that the norms and the volume forms of $g_c$ and those of the cylinder metric are comparable. Hence in many coarse estimates, there is no need to distinguish between them.
\end{rem}

The definition of \eqref{eqn:defA} naturally decomposes into two parts
\[
	A_{(\lambda)}= A_L + A_R
\]
where
\begin{equation}
	\begin{split}
		A_L:=& \varphi_3(t) e^{2t}  \sum_{i=1,2,3} (F_{-,i}\theta_i' + F_{+,i} \psi_i') + \varphi_1(t) w_l \\
		A_R:=& \varphi_4(t)\lambda^2 e^{-2t} \left( \psi_1' {\mathbf i} +\psi_2' {\mathbf j} + \psi_3' {\mathbf k} \right) + \varphi_2(t) w_r.
	\end{split}
	\label{eqn:Aleftright}
\end{equation}
Naturally, there is also a decomposition of $E_{gain}$,
\begin{eqnarray*}
	E_{gain}&=& \int_{[\log \lambda-\log \delta,\log \delta]\times S^3} \langle dA_{(\lambda)}+ [A_{(\lambda)}\wedge A_{(\lambda)}],  dA_{(\lambda)}+ [A_{(\lambda)}\wedge A_{(\lambda)}] \rangle_{g_c}  (d\omega dt)_{g_c} \\
	&=& E_{left}+E_{right}+E_{inter},
\end{eqnarray*}
where
\begin{equation*}
	\begin{split}
		E_{left}:=& \int_{[\log \lambda-\log \delta,\log \delta]\times S^3} \langle dA_L+ [A_L \wedge A_L],  dA_L+ [A_L \wedge A_L]  \rangle_{g_c}  (d\omega dt)_{g_c} \\
		E_{right}:=& \int_{[\log \lambda-\log \delta,\log \delta]\times S^3} \langle dA_R+ [A_R \wedge A_R],  dA_R+ [A_R \wedge A_R]  \rangle_{g_c}  (d\omega dt)_{g_c},
	\end{split}
\end{equation*}
and we used $E_{inter}$ to represent all terms related to the interaction between $A_L$ and $A_R$. 

\begin{lem}
	\label{lem:loss}
	\begin{equation}
		\begin{split}
			E_{left}-\mathcal Y \mathcal M_{g}(D,B_\delta\setminus B_{\lambda \delta^{-1}})&= O(\lambda^{5/2})\\	
			E_{right}-\mathcal Y \mathcal M_{g_e}(D_{stan},B_{\lambda^{-1} \delta}\setminus B_{\delta^{-1}})&= O(\lambda^3). 
		\end{split}
		\label{eqn:leftright}
	\end{equation}
\end{lem}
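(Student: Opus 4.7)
The plan is to view each side as an integral over the cylinder and isolate the contribution of the cut-off transitions. For the first identity, both $E_{left}$ and $\mathcal Y \mathcal M_g(D,B_\delta\setminus B_{\lambda\delta^{-1}})$ can be written as $\int_{\text{cyl}} |F(\cdot)|_{g_c}^2 (dV)_{g_c}$ (the second by conformal invariance in dimension four, yielding $A_{left}$ as the pulled-back connection form), so the difference is $\int_{\text{cyl}}\bigl(|F(A_L)|_{g_c}^2-|F(A_{left})|_{g_c}^2\bigr)(dV)_{g_c}$. The integrand vanishes identically wherever $\varphi_1=\varphi_3=1$, i.e.\ on $\{t>\tfrac{1}{2}\log\lambda+1\}\times S^3$, so the integration reduces to the left-half of the cylinder. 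Setting $\alpha = e^{2t}\sum_i(F_{-,i}\theta_i'+F_{+,i}\psi_i')$ and expanding
\[
F(A_L)-F(A_{left}) = d\varphi_1\wedge w_l + (\varphi_1-1)dw_l + d\varphi_3\wedge\alpha + (\varphi_3-1)d\alpha + (\text{quadratic terms}),
\]
I would use \eqref{eqn:Wleft} to bound $|w_l|,|dw_l|=O(e^{3t})$, together with $|\alpha|,|d\alpha|=O(e^{2t})$, to conclude $|F(A_L)-F(A_{left})|_{g_c}=O(e^{3t})$ on the $\varphi_1$-transition region. Since $|F(A_L)+F(A_{left})|_{g_c}=O(e^{2t})$ (from smoothness of $D$ at $\mathbf x_0$), the integrand is $O(e^{5t})$ and $\int_{-\infty}^{\frac{1}{2}\log\lambda+1}e^{5t}\,dt\,d\omega = O(\lambda^{5/2})$. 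The $\varphi_3$-transition near $t=\log\lambda-\log\delta$, where both $\alpha$ and $w_l$ are $O(\lambda^2)$ on a region of unit $t$-length, contributes only $O(\lambda^4)$ and is absorbed.

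For the second identity I would decompose
\begin{align*}
E_{right}-\mathcal Y \mathcal M_{g_e}(D_{stan},B_{\lambda^{-1}\delta}\setminus B_{\delta^{-1}}) &= \bigl[E_{right}-\mathcal Y \mathcal M_g(D_{stan,\lambda},B_\delta\setminus B_{\lambda\delta^{-1}})\bigr]\\
&\quad + \bigl[\mathcal Y \mathcal M_g(D_{stan,\lambda},B_\delta\setminus B_{\lambda\delta^{-1}})-\mathcal Y \mathcal M_{g_e}(D_{stan},B_{\lambda^{-1}\delta}\setminus B_{\delta^{-1}})\bigr].
\end{align*}
The second bracket is directly $O(\lambda^3)$ by \eqref{eqn:ymneck}. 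The first bracket equals $\int_{\text{cyl}}\bigl(|F(A_R)|_{g_c}^2 - |F(A_{right})|_{g_c}^2\bigr)(dV)_{g_c}$ by the same conformal invariance argument. Here $A_R=A_{right}$ on $\{t<\frac{1}{2}\log\lambda-1\}$, and on the right-half the dominant contribution comes from the $\varphi_2$-transition: \eqref{eqn:Wright} gives $|F(A_R)-F(A_{right})|_{g_c}=O(\lambda^4 e^{-4t})$ while $|F(A_R)+F(A_{right})|_{g_c}=O(\lambda^2 e^{-2t})$, so the integrand is $O(\lambda^6 e^{-6t})$ and $\int_{\frac{1}{2}\log\lambda-1}^{\log\delta}\lambda^6 e^{-6t}\,dt = O(\lambda^3)$. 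The $\varphi_4$-transition near $t=\log\delta$ again contributes only $O(\lambda^4)$.

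The main obstacle is bookkeeping: one must carefully expand $|F(A_L)|^2-|F(A_{left})|^2$ (and the analogous expression for $A_R$), and verify that cross terms such as $[\alpha\wedge w_l]$ on the left and $[\gamma\wedge w_r]$ on the right (with $\gamma = \lambda^2 e^{-2t}(\psi_1'\mathbf{i}+\psi_2'\mathbf{j}+\psi_3'\mathbf{k})$) are subdominant to the advertised pointwise rates $O(e^{3t})$ and $O(\lambda^4 e^{-4t})$ respectively; the checks use that on the cylinder $|\alpha|\cdot|w_l|=O(e^{5t})$ and $|\gamma|\cdot|w_r|=O(\lambda^6 e^{-6t})$, both of which are smaller than the leading differences. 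A related subtlety is that for $E_{right}$ the naive metric-discrepancy correction $\int e^t |F(A_{right})|_{g_{cy}}^2(dV)_{g_{cy}}$ implied by \eqref{eqn:smaller} is only $O(\lambda)$, too large; routing through Lemma \ref{lem:conformal} is essential so that the orthogonality built into conformal normal coordinates brings the metric-discrepancy contribution down to $O(\lambda^3)$.
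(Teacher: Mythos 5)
Your proposal is correct and follows essentially the same route as the paper: both compare $E_{left}$ (resp. $E_{right}$) with the corresponding energy written on the cylinder by estimating the difference $A_L-A_{left}$ (resp. $A_R-A_{right}$) coming from the cut-off transitions, using the decay rates \eqref{eqn:Wleft}, \eqref{eqn:Wright} and the pointwise bounds $O(e^{2t})$, $O(\lambda^2e^{-2t})$, which give exactly the $\lambda^{5/2}$ and $\lambda^3$ rates, and then invoke the second half of Lemma \ref{lem:conformal} for the right-hand identity. The only difference is cosmetic bookkeeping (you expand curvature differences directly, the paper bounds $\abs{dQ}+\abs{Q}$ and multiplies by the curvature-scale bound), so no further comment is needed.
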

\begin{proof}
Set
\[
	Q=A_L -A_{left}
\]
where $A_{left}$ is the local expression of connection $D$(see \eqref{eqn:Aleft}).
By the definitions of $\varphi_3$ and $\varphi_1$ and \eqref{eqn:Wleft}, we have
\begin{eqnarray*}
	\abs{dQ}+\abs{Q}&\leq & C(\delta) \chi_{[\log \lambda-\log \delta, \log\lambda -\log \delta+2]}(t) \lambda^2 + C(\delta) \chi_{[\log \lambda - \log \delta, \frac{1}{2}\log \lambda +1]}(t) e^{3t},
\end{eqnarray*}
where $\chi_{[a,b]}(t)$ is $1$ if $t\in [a,b]$ and $0$ otherwise.

By the definition of $A_L$ and $A_{left}$, we have
\begin{equation}
	\abs{A_L}, \abs{dA_L}, \abs{A_{left}}, \abs{dA_{left}} \leq C e^{2t}.
	\label{eqn:boundleft}
\end{equation}
Hence, by subtracting $E_{left}$ with the definition of $\mathcal Y \mathcal M_g(D, B_\delta\setminus B_{\lambda \delta^{-1}})$ (in $(t,\omega)$ coordinates), we obtain
\begin{eqnarray*}
	&& \abs{E_{left}- \mathcal Y \mathcal M_g(D,B_\delta\setminus B_{\lambda \delta^{-1}})}  \\
	&\leq& \int_{[\log \lambda - \log \delta, \log \delta]} (\abs{dQ}+\abs{Q}) C(\delta) e^{2t} dt \\
	&\leq& C(\delta) \lambda^4 + C(\delta) \int_{\log \lambda}^{\frac{1}{2}\log \lambda} e^{5t} dt \\
	&\leq& C(\delta) \lambda^{5/2}.
\end{eqnarray*}

The proof for the second half of the lemma starts with a similar computation. Set
\[
	Q=A_R -A_{right}
\]
By the definitions of $\varphi_4$ and $\varphi_2$ and \eqref{eqn:Wright}, we have
\begin{eqnarray*}
	\abs{dQ}+\abs{Q}&\leq & C(\delta) \chi_{[\log \delta-2, \log \delta]}(t) \lambda^2 + C(\delta) \chi_{[\frac{1}{2}\log \lambda -1, \log \delta]}(t) \lambda^4 e^{-4t}.
\end{eqnarray*}
Since
\begin{equation}
	\abs{A_R}, \abs{dA_R}, \abs{A_{right}}, \abs{dA_{right}} \leq C \lambda^2 e^{-2t},
	\label{eqn:boundright}
\end{equation}
we obtain as before
\begin{eqnarray*}
	&& \abs{E_{right}- \mathcal Y \mathcal M_{g}(D_{stan,\lambda},B_{\delta}\setminus B_{\lambda \delta^{-1}})}  \\
	&\leq& \int_{[\log \lambda - \log \delta, \log \delta]} (\abs{dQ}+\abs{Q}) C(\delta) \lambda^2 e^{-2t} dt \\
	&\leq& C(\delta) \lambda^4 + C(\delta) \int_{\frac{1}{2}\log \lambda}^{\log \delta} \lambda^6 e^{-6t} dt \\
	&\leq& C(\delta) \lambda^{3}.
\end{eqnarray*}
The rest of the proof follows from the second half of Lemma \ref{lem:conformal}.
\end{proof}

Next, we consider $E_{inter}$. By definition, $E_{gain}$ is a linear combination of the integrals of
\[
\langle dA_{(\lambda)}, dA_{(\lambda)} \rangle_{g_c}, \langle dA_{(\lambda)}, [A_{(\lambda)}\wedge A_{(\lambda)}] \rangle_{g_c} , \langle [A_{(\lambda)}\wedge A_{(\lambda)}], [A_{(\lambda)}\wedge A_{(\lambda)}] \rangle_{g_c} .
\]
The three terms above are quadratic, cubic and quartic respectively. When we substitute $A_{(\lambda)}$ by $A_L+A_R$, we obtain a linear combination of the integrals of
\[
\langle dA_{*}, dA_{*} \rangle_{g_c}, \langle dA_{*}, [A_{*}\wedge A_{*}] \rangle_{g_c} , \langle [A_{*}\wedge A_{*}], [A_{*}\wedge A_{*}] \rangle_{g_c} 
\]
where $*$ stands for either 'L' or 'R'. The terms in which all $*$'s are 'L' (or 'R') have been considered in $E_{left}$ (or $E_{right}$ respectively). It remains to consider those terms in which both 'L' and 'R' appear. $E_{inter}$ is a linear combination of the integrals of such terms.

Due to \eqref{eqn:boundleft} and \eqref{eqn:boundright}, we estimate the size of cubic term by 
\[
\abs{\langle dA_L,[A_R\wedge A_R]\rangle_{g_c}}\leq C(\delta) \lambda^2 (\lambda^2 e^{-2t}),
\]
which implies after integrating over $[\log \lambda -\log \delta,\log \delta]\times S^3$
\begin{equation*}
	\abs{\int \langle dA_L, [A_R\wedge A_R] \rangle_{g_c} (d\omega dt)_{g_c}}\leq \varepsilon(\delta) \lambda^2.
\end{equation*}
Here $\varepsilon(\delta)$ is a constant depending on $\delta$ which satisfies
\[
\lim_{\delta\to 0} \varepsilon(\delta)=0.
\]
It arises when we compute the integral
\[
\int_{\log \lambda-\log \delta}^{\log \delta} \lambda^2 e^{-2t} dt.
\]
Similar arguments work for all cubic and quartic terms. In summary, we have proved that
\begin{equation}
	\abs{E_{inter}- \int_{[\log\lambda -\log \delta,\log \delta]\times S^3} \langle dA_R, dA_L \rangle_{g_c}  (d\omega dt)_{g_c}}\leq \varepsilon(\delta) \lambda^2.
	\label{eqn:inter3}
\end{equation}
Due to \eqref{eqn:boundleft} and \eqref{eqn:boundright}, we derive from \eqref{eqn:inter3}
\begin{equation}
	\abs{E_{inter}- \int_{[\log\lambda -\log \delta,\log \delta]\times S^3} \langle dA_R, dA_L \rangle  d\omega dt}\leq \varepsilon(\delta) \lambda^2.
	\label{eqn:inter}
\end{equation}
In fact, for any matrix valued two forms $V$ and $W$, it follows from \eqref{eqn:smaller} that 
\[
\abs{\langle V, W \rangle_{g_c}- \langle V, W \rangle }\leq C \abs{V}\abs{W} e^t 
\]
and
\[
\abs{(d\omega dt)_{g_c} - (d\omega dt)}\leq Ce^t (d\omega dt).
\] The small constant $\varepsilon(\delta)$ appears when we integrate over the cylinder as before.

By throwing away the higher order terms in \eqref{eqn:Aleftright}, we define
\begin{equation}
	\begin{split}
		A'_L:=& \varphi_3(t) e^{2t}  \sum_{i=1,2,3} (F_{-,i}\theta_i' + F_{+,i} \psi_i') \\
		A'_R:=& \varphi_4(t)\lambda^2 e^{-2t} \left( \psi_1' {\mathbf i} +\psi_2' {\mathbf j} + \psi_3' {\mathbf k} \right).
	\end{split}
	\label{eqn:Ano_w}
\end{equation}
Direct computation shows
\begin{equation}
	\begin{split}
		\langle dA_L, dA_R \rangle =& \langle dA'_L, dA'_R \rangle  \\
		& + \langle dA_L', d(\varphi_2(t) w_r)\rangle + \langle dA'_R, d(\varphi_1(t)w_l) \rangle \\
		& + \langle d(\varphi_1 w_l), d(\varphi_2(t)w_r) \rangle .
	\end{split}
	\label{eqn:Aprime}
\end{equation}
An upper bound similar to \eqref{eqn:boundright} and \eqref{eqn:boundleft} holds for $A_L'$ and $A_R'$. Together with \eqref{eqn:Wright}, we estimate
\[
\abs{\langle dA'_L,d(\varphi_2(t)w_r) \rangle }\leq C\lambda^4 e^{-2t} \chi_{[\log \lambda -\log \delta, \frac{1}{2}\log \lambda +1]}(t).
\]
Integrating over the cylinder, we obtain
\begin{eqnarray*}
	&& \int_{[\log \lambda-\log \delta,\log \delta]\times S^3} \abs{\langle dA'_L,d(\varphi_2(t)w_r) \rangle } d\omega dt\\
	&\leq& C\int_{[\log \lambda-\log \delta, \frac{1}{2}\log \lambda+1]}\lambda^4 e^{-2t} dt\\
	&\leq& \varepsilon(\delta) \lambda^2.
\end{eqnarray*}
The other term in the second line of \eqref{eqn:Aprime} is estimated similarly. For the third line, we use \eqref{eqn:Wright} and \eqref{eqn:Wleft} to see
\begin{eqnarray*}
	&& \int_{[\log \lambda-\log \delta,\log \delta]\times S^3} \abs{\langle d(\varphi_1(t)w_l), d(\varphi_2(t)w_r) \rangle } d\omega dt\\
	&\leq& C\int_{[\frac{1}{2}\log \lambda -1, \frac{1}{2}\log \lambda+1]}\lambda^4 e^{-t} dt\\
	&\leq& C(\delta) \lambda^{7/2}.
\end{eqnarray*}

Combining these upper bounds with \eqref{eqn:inter} and \eqref{eqn:Aprime}, we get
\begin{equation}
	\abs{E_{inter}- \int_{[\log\lambda -\log \delta,\log \delta]\times S^3} \langle dA'_R, dA'_L \rangle  d\omega dt}\leq \varepsilon(\delta) \lambda^2.
	\label{eqn:inter2}
\end{equation}
\begin{rem}\label{rem:later}
	Later, we shall fix $\delta$ first and consider the limit $\lambda\to 0$. In this sense, $C(\delta)\lambda^{7/2}$ is much smaller than $\varepsilon(\delta)\lambda^2$.
\end{rem}

To understand the interaction term in \eqref{eqn:inter2}, we remove the cut-off functions and define
\begin{equation}
	\begin{split}
		A''_L:=&  e^{2t}  \sum_{i=1,2,3} (F_{-,i}\theta_i' + F_{+,i} \psi_i') \\
		A''_R:=& \lambda^2 e^{-2t} \left( \psi_1' {\mathbf i} +\psi_2' {\mathbf j} + \psi_3' {\mathbf k} \right).
	\end{split}
	\label{eqn:Ano_cut}
\end{equation}

\begin{lem}\label{lem:vanish}
	For any $t\in [\log \lambda -\log \delta,\log \delta]$, we have
\[
\int_{\set{t} \times S^3} \langle dA''_R, dA''_L \rangle d\omega  =0.
\]
\end{lem}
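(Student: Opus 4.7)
The plan is to exploit the explicit decomposition of $A''_L$ and $A''_R$ into SD and ASD parts (via Lemma~\ref{lem:twoforms}), combined with the twist formula of Lemma~\ref{lem:twist} and the vanishing of $\int_{S^3} T\, d\omega$.

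First, I would expand the inner product using the bilinearity in the Lie algebra and form factors:
\begin{equation*}
\langle dA''_L, dA''_R\rangle = \sum_{i,j}\langle F_{-,i},\mathbf{l}_j\rangle_{\mathfrak g}\,\langle d(e^{2t}\theta'_i), d(\lambda^2 e^{-2t}\psi'_j)\rangle + \sum_{i,j}\langle F_{+,i},\mathbf{l}_j\rangle_{\mathfrak g}\,\langle d(e^{2t}\psi'_i), d(\lambda^2 e^{-2t}\psi'_j)\rangle,
\end{equation*}
where $\mathbf{l}_1=\mathbf i,\mathbf{l}_2=\mathbf j,\mathbf{l}_3=\mathbf k$. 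By Lemma~\ref{lem:twoforms}, $d(e^{2t}\psi'_i)$ is SD while $d(e^{-2t}\psi'_j)$ is ASD, so the second sum vanishes pointwise. This already disposes of the $F_{+,i}$ contribution without any integration.

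Next I would treat the surviving ASD--ASD piece. Both $d(e^{2t}\theta'_i)$ and $d(e^{-2t}\psi'_j)$ are ASD two-forms on the cylinder, and by \eqref{eqn:easier} together with the conformality of $\Pi$, the family $\{d(e^{2t}\theta'_i)\}_{i=1,2,3}$ is a pointwise orthogonal frame of $\Lambda^2_-$ with common norm squared (in the cylinder metric). A quick conformal scaling shows $\langle d(e^{2t}\theta'_i), d(e^{2t}\theta'_k)\rangle = 8e^{4t}\delta_{ik}$. Applying Lemma~\ref{lem:twist} to rewrite $d(e^{-2t}\psi'_j) = e^{-4t}\sum_k T_{jk}\,d(e^{2t}\theta'_k)$, I obtain
\begin{equation*}
\langle d(e^{2t}\theta'_i),\, d(\lambda^2 e^{-2t}\psi'_j)\rangle = 8\lambda^2\, T_{ji}.
\end{equation*}

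Finally, integrating over the slice $\{t\}\times S^3$ and invoking the last conclusion of Lemma~\ref{lem:twist}, namely $\int_{S^3} T\, d\omega = 0$, I conclude
\begin{equation*}
\int_{\{t\}\times S^3}\langle dA''_L, dA''_R\rangle\, d\omega = 8\lambda^2\sum_{i,j}\langle F_{-,i},\mathbf{l}_j\rangle_{\mathfrak g}\int_{S^3} T_{ji}\, d\omega = 0,
\end{equation*}
which is the desired identity. There is no genuine obstacle here: the proof is a bookkeeping exercise organized around the two structural inputs (SD/ASD orthogonality and $\int T = 0$). The only mild subtlety is ensuring the conformal factor is handled consistently when invoking pointwise orthogonality of the ASD basis; this is why it is important that the inner products in Lemma~\ref{lem:twist} and in the statement above are both taken with respect to $g_{cy}$.
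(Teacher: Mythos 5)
Your proof is correct and follows essentially the same route as the paper: expand $\langle dA''_R,dA''_L\rangle$, kill the $F_{+,i}$ terms pointwise by SD/ASD orthogonality (Lemma \ref{lem:twoforms}), and reduce the remaining terms to the entries of the matrix $T$, which integrate to zero over $S^3$ by Lemma \ref{lem:twist}. The only difference is that you make the intermediate identity $\langle d(e^{2t}\theta'_i), d(\lambda^2 e^{-2t}\psi'_j)\rangle = 8\lambda^2 T_{ji}$ explicit, where the paper simply cites Lemma \ref{lem:twist}.
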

\begin{proof}
	By \eqref{eqn:Ano_cut}, we have
	\begin{equation}\label{eqn:A2prime}
		\begin{split}
			dA''_L &= \sum_{i=1,2,3} F_{-,i} d(e^{2t}\theta_i') + F_{+,i} d(e^{2t}\psi'_i) \\
			dA''_R & = \lambda^2 \left(  d(e^{-2t} \psi_1') {\mathbf i} + d(e^{-2t}\psi_2') {\mathbf j} + d(e^{-2t} \psi_3'){\mathbf k} \right).
		\end{split}
	\end{equation}
	By Lemma \ref{lem:twoforms}, since SD forms are perpendicular to ASD forms, we get
	\begin{eqnarray*}
		\langle dA''_R,  dA''_L \rangle  &=& \lambda^2 \sum_{i=1,2,3} \langle d(e^{2t}\theta_i'), d(e^{-2t}\psi_1') \rangle (F_{-,i},{\mathbf i})_{\mathfrak g} \\
&& + \lambda^2 \sum_{i=1,2,3} \langle d(e^{2t}\theta_i'), d(e^{-2t}\psi_2') \rangle (F_{-,i},{\mathbf j})_{\mathfrak g} \\
&& + \lambda^2 \sum_{i=1,2,3} \langle d(e^{2t}\theta_i'), d(e^{-2t}\psi_3') \rangle (F_{-,i},{\mathbf k})_{\mathfrak g}. 
	\end{eqnarray*}
	Here we have used $(\cdot,\cdot)_{\mathfrak g}$ for the inner product of Lie algebra $\mathfrak g$. Notice that $F_{-,i}$'s are constant matrices, then this lemma is a corollary of Lemma \ref{lem:twist}.
\end{proof}

We continue to study the interaction term in \eqref{eqn:inter2},
\begin{equation}
	\begin{split}
	\langle dA'_R, dA'_L \rangle  =& \langle \varphi_4(t)dA''_R, \varphi_3(t) dA''_L \rangle  \\
	& + \langle \varphi_4'(t) dt\wedge A''_R, \varphi_3(t) dA''_L \rangle \\
	& + \langle \varphi_4(t) dA''_R, \varphi_3'(t) dt\wedge A''_L \rangle .
	\end{split}
	\label{eqn:3line}
\end{equation}
The integration of the first line over the cylinder vanishes due to Lemma \ref{lem:vanish}. For the second line, 
\begin{eqnarray*}
	dt\wedge A''_R &=& \lambda^2 e^{-2t}\left( dt\wedge \psi_1' {\mathbf i} + dt\wedge \psi_2'{\mathbf j} + dt\wedge \psi_3' {\mathbf k} \right).
\end{eqnarray*}
Using the obvious equality
\begin{equation}
4dt\wedge \psi_1' = e^{-2t} d(e^{2t}\psi_1')- e^{2t} d(e^{-2t}\psi_1'),
	\label{eqn:obv}
\end{equation}
we have
\begin{eqnarray*}
	&& \langle dt\wedge A''_R,  dA''_L \rangle \\
	&=& \frac{\lambda^2}{4} \langle \left( e^{-4t} d(e^{2t}\psi_1')- d(e^{-2t}\psi_1') \right){\mathbf i}, dA''_L \rangle + (\dots({\mathbf j},{\mathbf k})\dots).
\end{eqnarray*}
Notice that $dA''_L$ is a constant linear combination of $d(e^{2t}\theta_i')$ and $d(e^{2t}\psi_i')$, both of which are perpendicular to $d(e^{-2t}\psi_1')$ with respect to the $L^2$ inner product (of two forms) on $S^3$. To see this, we need the orthogonality between SD forms and ASD forms, and also Lemma \ref{lem:twist}.

Hence,
\begin{eqnarray*}
	&&\int_{\set{t}\times S^3} \langle dt\wedge A''_R, dA''_L \rangle \\
	&=& \int_{\set{t}\times S^3}\frac{\lambda^2}{4} \langle e^{-4t} d(e^{2t}\psi_1'){\mathbf i}, \sum_{i=1,2,3} F_{+,i}d(e^{2t}\psi'_i) \rangle + (\dots({\mathbf j},{\mathbf k})\dots) d\omega \\
	&=& \frac{\lambda^2}{4} e^{-4t} \norm{d(e^{2t}\psi_1')}_{L^2(S^3)}^2 ({\mathbf i},F_{+,1})_{\mathfrak g} + (\dots({\mathbf j},{\mathbf k})\dots) 
\end{eqnarray*}

Recall that $d(e^{2t}\psi_1')=2\Pi^*(dx_1\wedge dx_2 + dx_3\wedge dx_4)$. Since $\Pi$ is conformal and $\norm{\Pi^*(dx_i)}= e^t \norm{dx_i}$, we know 
\[
\frac{1}{4}e^{-4t} \norm{d(e^{2t}\psi_1')}_{L^2(S^3)}^2
\]
is a universal constant, which we denote by $c_0$.

In summary,
\begin{equation}
	\begin{split}
	& \int_{\set{t}\times S^3} \langle \varphi_4'(t) dt\wedge A''_R, \varphi_3(t) dA''_L \rangle d\omega \\
	=& \varphi_3(t)\varphi_4'(t) c_0 \lambda^2 \left( ({\mathbf i},F_{+,1})_{\mathfrak g} + ({\mathbf j},F_{+,2})_{\mathfrak g} + ({\mathbf k}, F_{+,3})_{\mathfrak g} \right).
	\end{split}
	\label{eqn:ijk}
\end{equation}
By definition, $\varphi_4'(t)$ is zero unless $t\in [\log \delta-2,\log \delta]$ and for such $t$, we always have $\varphi_3(t)=1$. Moreover, $\varphi'_4(t)$ is nonpositive and
\[
\int_{\log \delta-2}^{\log \delta} \varphi_4'(t)dt=-1.
\]
Hence, integrating \eqref{eqn:ijk} over $[\log \lambda -\log \delta, \log \delta]$ gives
\begin{equation}
 -c_0 \lambda^2 \left( ({\mathbf i},F_{+,1})_{\mathfrak g} + ({\mathbf j},F_{+,2})_{\mathfrak g} + ({\mathbf k}, F_{+,3})_{\mathfrak g} \right).
	\label{eqn:copy}
\end{equation}
The computation for the third line of \eqref{eqn:3line} is similar and the result is exactly the same as above. This is somewhat subtle, because at a first glance, we notice that $\varphi_3'$ is nonnegative instead of nonpositive (as is $\varphi'_4$). There is another minus sign in the computation that cancels this one.

For completeness, we list some key steps in this computation. By \eqref{eqn:A2prime}, $dA''_R$ is a linear combination of $d(e^{-2t}\psi_i')$. By \eqref{eqn:obv} and its analog for $dt\wedge d\theta'_i$, we know $dt\wedge A''_L$ is a linear combination of
\[
d(e^{2t}\theta_i'),\quad d(e^{-2t}\theta_i'),\quad d(e^{2t}\psi_i'),\quad d(e^{-2t}\psi_i').
\]
Because of the orthogonality in $L^2$ inner product of two forms as before, when we compute the integration of the pairing $\langle dt\wedge A''_L,  dA''_R \rangle $, it suffices to look at the coefficients of $d(e^{-2t}\psi_i')$,
\begin{eqnarray*}
	&& \int_{\set{t}\times S^3} \langle \varphi_3'(t) dt\wedge A''_L, \varphi_4(t) dA''_R \rangle d\omega \\
	&=& \varphi'_3(t)\varphi_4(t) \int_{\set{t}\times S^3}  \langle dt\wedge\sum_{i=1,2,3}(F_{+,i}e^{2t}\psi_i'), \lambda^2 d(e^{-2t}\psi_1'){\mathbf i} \rangle  d\omega + (\dots({\mathbf j},{\mathbf k})\dots) \\
	&=& \varphi'_3(t)\varphi_4(t)\frac{-1}{4} \int_{\set{t}\times S^3}  \langle \sum_{i=1,2,3}(F_{+,i}e^{4t}d(e^{-2t}\psi_i')), \lambda^2 d(e^{-2t}\psi_1'){\mathbf i} \rangle  d\omega + (\dots({\mathbf j},{\mathbf k})\dots)
\end{eqnarray*}
Here in the last line above, we used \eqref{eqn:obv} again and this time it is the second term that remains and introduces an extra minus sign in comparison with the previous computation. The rest of the computation is the same and gives us another copy of \eqref{eqn:copy}.

Now, Lemma \ref{lem:vanish}, the equations \eqref{eqn:3line} and \eqref{eqn:inter2} imply that
\[
\abs{E_{iner} + 2c_0 \lambda^2 \mathcal P} \leq \varepsilon(\delta)\lambda^2
\]
where
\begin{equation}
\mathcal P:= ({\mathbf i},F_{+,1})_{\mathfrak g}+ ({\mathbf j},F_{+,2})_{\mathfrak g} + ({\mathbf k},F_{+,3})_{\mathfrak g}
	\label{eqn:P}
\end{equation}
is a constant depending only on the SD part of the curvature (at the point ${\bf x}_0$) and a choice of basis in $\mathfrak s \mathfrak o(3)$.
Together with Lemma \ref{lem:loss} and \eqref{eqn:loss}, we obtain
\[
E_{gain}-E_{loss}= -2c_0\lambda^2 \mathcal P + \varepsilon(\delta) \lambda^2.
\]

Finally, if $\mathcal P$ is positive, then we may choose $\delta$ small so that
\[
-2c_0 \mathcal P + \varepsilon(\delta)<0.
\]
For sufficiently small $\lambda$(see Remark \ref{rem:later}), the new connection $D'$ satisfies \eqref{eqn:goal}.

Recall that in Section \ref{sec:pre}, when we wrote $D_{stan}$ into the form \eqref{eqn:oldA}, we have chosen ${\mathbf i}$, ${\mathbf j}$ and ${\mathbf k}$ to be a standard basis of $\mathfrak s \mathfrak u(2)=\mathfrak s \mathfrak o(3)$. Since the image of adjoint representation of $SO(3)$ is the set of all orientation-preserving orthogonal transformations of $\mathfrak s \mathfrak o(3)$, we may choose $({\mathbf i},{\mathbf j},{\mathbf k})$ to be any orthogonal basis with the given orientation. There exist good choices that make $\mathcal P>0$, because of the following trivial fact.

\begin{lem}
	\label{lem:trivial}
	Let $(F_1,F_2,F_3)$ be any three vectors in $\Real^3$ that are not all zero. There exists an orthonormal basis $(e_1,e_2,e_3)$ with any required orientation such that
	\begin{equation}
F_1\cdot e_1 + F_2\cdot e_2 + F_3\cdot e_3>0.
		\label{eqn:trivial}
	\end{equation}
\end{lem}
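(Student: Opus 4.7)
The plan is a short averaging argument over the circle of orthonormal bases whose first vector is aligned with one of the $F_i$. Since $(F_1,F_2,F_3)$ is not the zero triple, at least one $F_i$ is nonzero; I may assume $F_1\neq 0$, the other cases being entirely analogous with the roles of the indices permuted.

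Set $e_1:=F_1/\abs{F_1}$, so that $F_1\cdot e_1=\abs{F_1}>0$, and complete $e_1$ to an orthonormal basis $(e_1,f_2,f_3)$ of $\Real^3$ having the prescribed orientation (if a first completion has the wrong orientation, swap the last two vectors). For $\theta\in[0,2\pi)$, let $(e_2(\theta),e_3(\theta))$ denote the rotation of $(f_2,f_3)$ by angle $\theta$ inside the plane $e_1^\perp$; the basis $(e_1,e_2(\theta),e_3(\theta))$ remains orthonormal and of the prescribed orientation for every $\theta$, because a planar rotation inside $e_1^\perp$ does not change the sign of the determinant of the full basis. Define
\[
g(\theta):=F_1\cdot e_1+F_2\cdot e_2(\theta)+F_3\cdot e_3(\theta)=\abs{F_1}+F_2\cdot e_2(\theta)+F_3\cdot e_3(\theta).
\]

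The key observation is that $\int_0^{2\pi}e_j(\theta)\,d\theta=0$ for $j=2,3$, since a unit vector rotating once around a circle averages to the origin. Hence $\frac{1}{2\pi}\int_0^{2\pi}g(\theta)\,d\theta=\abs{F_1}>0$, so there exists $\theta_*\in[0,2\pi)$ with $g(\theta_*)>0$, and $(e_1,e_2(\theta_*),e_3(\theta_*))$ is the desired basis. There is no serious technical obstacle here; the only point deserving attention is the handling of the two possible orientations, which is taken care of once at the outset by the possible swap of $f_2$ and $f_3$.
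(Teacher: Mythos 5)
Your proof is correct, and it takes a genuinely different route from the paper's. The paper argues constructively: after ordering $\abs{F_1}\geq\abs{F_2}\geq\abs{F_3}$ it sets $e_1=F_1/\abs{F_1}$ and then splits into cases according to which of $F_2,F_3$ are linearly independent of $F_1$, using a Gram--Schmidt step to force $F_2\cdot e_2>0$ (or $F_3\cdot e_3>0$) and bounding the remaining term by $\abs{F_3}\leq\abs{F_1}=F_1\cdot e_1$. You instead fix $e_1=F_1/\abs{F_1}$ (any nonzero $F_i$ works, by the symmetric argument you indicate), rotate the pair $(e_2,e_3)$ by an angle $\theta$ inside $e_1^{\perp}$, and average: since $\int_0^{2\pi}e_j(\theta)\,d\theta=0$, the mean of $g(\theta)$ equals $\abs{F_1}>0$, so some $\theta_*$ gives $g(\theta_*)>0$; planar rotations preserve orthonormality and orientation, and the one-time swap of $f_2,f_3$ handles the prescribed orientation. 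Your averaging argument buys a cleaner, case-free proof that does not need the norm-ordering assumption; the paper's construction is more explicit, exhibiting a concrete basis (essentially by Gram--Schmidt) rather than appealing to a mean-value selection, which is of no consequence for the way the lemma is used. Both are complete and correct.
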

\begin{proof}
	Assume that $\abs{F_1}\geq \abs{F_2}\geq \abs{F_3}$. By $F_1\ne 0$, we take
\[
e_1= \frac{F_1}{ \abs{F_1}}.
\]

(1) In case that $F_2$ and $F_3$ are multiples of $F_1$, we can take any $e_2$ and $e_3$ to make a basis. The inequality \eqref{eqn:trivial} holds because $F_2\cdot e_2+ F_3\cdot e_3=0$.

(2) In case that $F_1$ and $F_2$ are linearly independent, set
\[
e_2= \frac{F_2- (F_2\cdot e_1)e_1}{ \abs{F_2- (F_2\cdot e_1)e_1}}
\]
and choose $e_3$ to be perpendicular to $e_1$ and $e_2$ with the right orientation. We then have $F_2\cdot e_2>0$ and 
\[
\abs{F_3\cdot e_3}\leq \abs{F_3}\leq \abs{F_1}= F_1\cdot e_1.
\]

(3) In case that $F_1$ and $F_3$ are linearly independent, the previous argument works by setting
\[
e_3= \frac{F_3- (F_3\cdot e_1)e_1}{ \abs{F_3- (F_3\cdot e_1)e_1}}
\]
and choosing $e_2$ according to the orientation. Noticing that $F_3\cdot e_3>0$ and
\begin{equation*}
	\abs{F_2\cdot e_2}\leq F_1\cdot e_1,
\end{equation*}
the proof is done.
\end{proof}

\section{Proof of the main theorem}\label{sec:proof}
In this section, we prove Theorem \ref{thm:main}.

(1) For any $p_1\in K_\eta$ with $-3\leq p_1\leq 3$, we take any minimizing sequence $D_i$ in $\mathcal C_{\eta,p_1}$ and run the Yang-Mills $\alpha$-flow as in Section \ref{subsec:alpha} to get a new minimizing sequence $D_i'$. Using the notations therein, we discuss the sign of $p_1-p_{1,\infty}$. 

If $p_1=p_{1,\infty}$, by Lemma \ref{lem:cor}, there is no bubble at all, which implies that $D_{\infty}$ is a minimizer in $\mathcal C_{\eta,p_1}$ and there is nothing to prove.

If $p_1<p_{1,\infty}$, Lemma \ref{lem:cor} and \eqref{eqn:ei} together imply that
\begin{equation}
\inf_{D'\in \mathcal C_{\eta,p_1}} \mathcal Y \mathcal M(D')= \mathcal Y \mathcal M(D_\infty) + 4\pi^2 (p_{1,\infty}-p_1).
	\label{eqn:over}
\end{equation}
Since $p_1-p_{1,\infty}$ is a multiple of $4$, $p_1<p_{1,\infty}$ and $-3\leq p_1\leq 3$ together imply that $p_{1,\infty}> 0$. It then follows from \eqref{eqn:pontryagin}, that there exists $\bx_0\in M$ satisfying
\[
\abs{(F_{D_\infty})^+}(\bx_0)>0.
\]
By Theorem 1.4, there is a connection $D'$ on $E_{\eta,p_1}$ such that
\[
\mathcal Y \mathcal M(D')< \mathcal Y \mathcal M(D_\infty) + 4\pi^2 (p_{1,\infty}-p_1).
\]
This is a contradiction to \eqref{eqn:over}.

The proof for the case $p_1>p_{1,\infty}$ is similar and omitted. The only difference is that we need to use an orientation reversing version of Theorem \ref{thm:tech}.

We give a proof of the part (3) in Theorem \ref{thm:main} only and the proof of the part (2) is the same.

For any $p_1\leq 0$ in $K_\eta$, we do a minimizing as before. Let $D_i'$ be the minimizing sequence in $\mathcal C_{\eta,p_1}$ given in Section \ref{subsec:alpha}. Again, we discuss the sign of $p_1-p_{1,\infty}$.

If $p_1=p_{1,\infty}$, then there is nothing to prove.

If $p_1> p_{1,\infty}$, we know $p_{1,\infty}<0$ and hence, for some $\bx_0\in M$, 
\[
(F_{D_\infty})^- (\bx_0)\ne 0.
\]
Now, we apply Theorem \ref{thm:tech} (with reversed orientation) to $D_\infty$ to get a connection $D'$ on $E_{\eta,p_1}$ with
\[
\mathcal Y \mathcal M(D')< \mathcal Y \mathcal M(D_\infty) + 4\pi^2 (p_1-p_{1,\infty}).
\]
However, by Lemma \ref{lem:cor} and \eqref{eqn:ei}, we know
\[
\inf_{D'\in \mathcal C_{\eta,p_1}} \mathcal Y \mathcal M(D')= \mathcal Y \mathcal M(D_\infty) + 4\pi^2 (p_1-p_{1,\infty}).
\]
This contradiction shows that $p_1>p_{1,\infty}$ is not possible.  A similar argument shows that $p_1\leq 0< p_{1,\infty}$ is not possible.

It remains to see what happens if $p_1<p_{1,\infty}\leq 0$. In this case, we claim that $D_\infty$ is ASD. Otherwise, we have $\bx_0\in M$ satisfying
\[
(F_{D_\infty})^+ (\bx_0) \ne 0,
\]
from which we draw a contradiction as before.

In summary, either for each nonpositive $p_1\in K_{\eta}$, the direct minimizing gives a minimizer in $\mathcal C_{\eta,p_1}$; or if one of them fail, we get an ASD connection $D_\infty$ in $\mathcal C_{\eta,p_{1,\infty}}$ with some $p_1<p_{1,\infty}\leq 0$.




\bibliographystyle{alpha}
\bibliography{foo}

\end{document}